\theoremstyle{plain}
\newtheorem{thmIntr}{Theorem}
\newaliascnt{lem}{thm}
\newtheorem{lem}[lem]{Lemma}
\newaliascnt{cor}{thm}
\newaliascnt{prop}{thm}
\newtheorem{prop}[prop]{Proposition}
\theoremstyle{definition}
\newaliascnt{rem}{thm}
\newtheorem{rem}[rem]{Remark}
\newaliascnt{defn}{thm}
\newaliascnt{ex}{thm}
\newtheorem{ex}[ex]{Example}
\numberwithin{equation}{section}
\def\bP{\ensuremath{\mathbb{P}}}
\def\bQ{\ensuremath{\mathbb{Q}}}
\def\cA{\ensuremath{\mathcal{A}}}
\def\cC{\ensuremath{\mathcal{C}}}
\def\cD{\ensuremath{\mathcal{D}}}
\def\cH{\ensuremath{\mathcal{H}}}
\def\cM{\ensuremath{\mathcal{M}}}
\def\cO{\ensuremath{\mathcal{O}}}
\def\cP{\ensuremath{\mathcal{P}}}
\def\cR{\ensuremath{\mathcal{R}}}
\def\cS{\ensuremath{\mathcal{S}}}
\def\cT{\ensuremath{\mathcal{T}}}
\newcommand{\tC}{{\widetilde C}}
\def\ocD{\ensuremath{\overline{\mathcal{D}}}}
\def\ocM{\ensuremath{\overline{\mathcal{M}}}}
\def\ocR{\ensuremath{\overline{\mathcal{R}}}}
\def\ocS{\ensuremath{\overline{\mathcal{S}}}}
\DeclareMathOperator{\Pic}{Pic}
\DeclareMathOperator{\st}{st}
\definecolor{applegreen}{rgb}{0.55, 0.71, 0.0}
\newcommand{\set}[1]{\left\{#1\right\}}
\newcommand\restr[2]{{% we make the whole thing an ordinary symbol
 \left.\kern-\nulldelimiterspace % automatically resize the bar with \right
 #1 % the function
 \vphantom{\big|} % pretend it's a little taller at normal size
 \right|_{#2} % this is the delimiter
 }}
\title{The divisors of Prym semicanonical pencils}
\author[C. Maestro P\'erez and A. Rojas]{Carlos Maestro P\'erez and Andr\'es Rojas}
\address{C.M.P.: BMS -- Humboldt-Universit\"{a}t zu Berlin, Institut f\"{u}r Mathematik, Rudower Chaussee 25, 10099 Berlin, Germany}
\email{maestroc@mathematik.hu-berlin.de}
\address{A.R.: BGSMath -- Departament de Matem\`atiques i Inform\`atica,
Universitat de Barcelona, Gran Via de les Corts Catalanes, 585, 08007 Barcelona, Spain}
\curraddr{Mathematisches Institut, Universität Bonn, Endenicher
Allee 60, 53115 Bonn, Germany}
\email{arojas@math.uni-bonn.de} 
\thanks{C.~M.~P. was supported by a German DFG/BMS Phase II scholarship granted through the Humboldt-Universit\"{a}t. A.~R.~was supported by the Spanish MINECO grants MDM-2014-0445, RYC-2015-19175 and PID2019-104047GB-I00.}
\begin{document}
\maketitle

\begin{abstract}{
In the moduli space $\cR_g$ of double \'etale covers of curves of a fixed genus $g$, the locus of covers of curves with a semicanonical pencil decomposes as the union of two divisors $\cT^e_g$ and $\cT^o_g$. Adapting arguments of Teixidor for the divisor of curves having a semicanonical pencil, we prove that both divisors are irreducible and compute their divisor classes in the Deligne-Mumford compactification $\ocR_g$.
}
\end{abstract}

\section{Introduction}

Let $\pi: \tC \to C$ be a double \'etale cover between smooth curves of genus $g=g(C)$ and $\widetilde g=g(\tC)=2g-1$, and denote by $(P,\Xi)$ its (principally polarized) Prym variety.

In his fundamental work \cite{mu}, Mumford classified the singularities of the theta divisor $\Xi$. More precisely, he considered a translation $P^+$ of the Prym variety to $\Pic^{2g-2}(\tC)=\Pic^{\widetilde g-1}(\tC)$, together with a canonical theta divisor $\Xi^+\subset P^+$:
\begin{gather*}
P^+=\{M\in\Pic^{2g-2}(\tC)\mid\mathrm{Nm}_\pi(M)=\omega_C,\;h^0(\tC,M)\text{ even}\}, \\
\Xi^+=\{M\in P^+\mid h^0(\tC,M)\geq 2\}
\end{gather*}
Then every singular point of $\Xi^+$ is \textit{stable} ($M\in \Xi^{+}$ with $h^0(\tC, M)\geq4$) or \textit{exceptional} ($M=\pi^*L\otimes A \in \Xi ^{+}$ such that $h^0(C,L)\geq 2$ and $h^0(\tC,A)>0$).

Assume $C$ has a semicanonical pencil, that is, an even theta-characteristic $L$ with $h^0(C,L)\geq2$ (in the literature, this is also frequently referred to as a \emph{vanishing theta-null}). If $h^0(\tC,\pi^*L)$ is furthermore even, then $M=\pi^*L\in\Xi^{+}$ is an example of exceptional singularity. In that case, $L$ is called an \emph{even semicanonical pencil} for the cover $\pi$, and the Prym variety $(P,\Xi)$ belongs to the divisor $\theta_{null}\subset\cA_{g-1}$ of principally polarized abelian varieties whose theta divisor contains a singular $2$-torsion point.

In the paper \cite{be_invent}, Beauville showed that the Andreotti-Mayer locus
\[
  \mathcal{N}_0=\set{(A,\Xi) \in \mathcal A_4 \mid \text{Sing}\,(\Xi) \text{ is non-empty} }
\]
in $\cA_4$ is the union of two irreducible divisors: the (closure of the) Jacobian locus $\mathcal{J}_4$ and $\theta_{null}$.
An essential tool for the proof is the extension of the Prym map $\cP_g:\cR_g\to\cA_{g-1}$ to a proper map $\widetilde\cP_g:\widetilde\cR_g\to\cA_{g-1}$, by considering admissible covers instead of only smooth covers.
In the case $g=5$, this guarantees that every 4-dimensional principally polarized abelian variety is a Prym variety (i.e.~the dominant map $\cP_5$ is replaced by the surjective map $\widetilde {\cP}_5$). 

Then, one of the key points in Beauville's work is an identification of the coverings whose (generalized) Prym variety is contained in $\theta_{null}$. Indeed, the results in \cite[Section~7]{be_invent} together with \cite[Theorem~4.10]{be_invent} show that
\[
\cT^e=\text{(closure in $\widetilde\cR_5$ of)}\set{ [\pi:\tC \longrightarrow C ]\in \cR_5 \mid \text{the cover $\pi$ has an even semicanonical pencil}}
\]
is irreducible and equals $\widetilde \cP_5^{-1}(\theta_{null})$. Therefore, the irreducibility of $\theta_{null}\subset\cA_4$ is obtained from the irreducibility of $\cT^e$; the proof of the latter starts by noticing that 
\[
 \cT = \set{[C]\in  \cM_5 \mid \text{$C$ has a semicanonical pencil}}
\]
is an irreducible divisor of $ \cM_5$.

Now let us consider double étale covers of curves with a semicanonical pencil, in arbitrary genus. For a fixed $g\geq3$, let $\cT_g\subset\cM_g$ denote the locus of (isomorphism classes of) curves with a semicanonical pencil (i.e.~with an even, effective theta-characteristic). 
Note that $\cT_g$ is the divisorial part of the locus of curves admitting a theta-characteristic of positive (projective) dimension (\cite{te2}). 

The general element of $\cT_g$ has a unique such theta-characteristic (which is a semicanonical pencil $L$ with $h^0(C,L)=2$), and the pullback of $\cT_g$ to $ \cR_g$ decomposes as a union $\mathcal T^e_g \cup \mathcal T^o_g$ according to the parity of $h^0(\tC,\pi^*L)$. In other words, the general element of $\cT^e_g$ (resp.~$\cT^o_g$) is a cover with an even semicanonical pencil (resp.~an odd semicanonical pencil).

In view of Beauville's work, it is natural to ask whether $\cT^e_g$ and $\cT^o_g$ are irreducible divisors, and to ask about the behaviour of the restricted Prym maps ${\widetilde\cP_g}|_{\cT_g^e}$ and ${\widetilde\cP_g}|_{\cT_g^o}$.

This paper exclusively deals with the first question, and studies the divisors $\cT^e_g$ and $\cT^o_g$ of even and odd semicanonical pencils. Aside from its independent interest, it provides tools for attacking the second question; a study of the restricted Prym maps ${\widetilde\cP_g}|_{\cT_g^e}$ and ${\widetilde\cP_g}|_{\cT_g^o}$ is carried out in the subsequent paper \cite{lnr}.

Coming back to the first question, the divisor $\cT_g\subset\cM_g$ was studied by Teixidor in \cite{te}. Using the theory of limit linear series on curves of compact type developed by Eisenbud and Harris in \cite{eh}, Teixidor proved the irreducibility of $\cT_g$ and computed the class of its closure in the Deligne-Mumford compactification $\ocM_g$. 

In our case, we will work in the Deligne-Mumford compactification $\ocR_g$ of $\cR_g$ (first considered in \cite[Section 6]{be_invent}, during the construction of the proper Prym map). Following closely Teixidor's approach, we obtain natural analogues of her results for the two divisors of Prym semicanonical pencils:

\vspace{2mm}

\begin{thmIntr}\label{thmA}
Let $[\cT^e_g],[\cT^o_g]\in\Pic(\ocR_g)_\bQ$ denote the classes of (the closures of) $\cT^e_g$, $\cT^o_g$ in the Deligne-Mumford compactification $\ocR_g$. Then, the following equalities hold:
\begin{align*}
 [\cT^e_g]&=a \lambda -b_0'\delta_0'-b_0'' \delta_0''-b_0^{ram}\delta_0^{ram} -\sum_{i=1}^{ \lfloor g/2\rfloor} (b_i\delta_i+b_{g-i}\delta_{g-i} +b_{i:g-i}\delta_{i:g-i}), \\
 [\cT^o_g]&=c \lambda -d_0'\delta_0'-d_0'' \delta_0''-d_0^{ram}\delta_0^{ram} -\sum_{i=1}^{ \lfloor g/2\rfloor} (d_i\delta_i+d_{g-i}\delta_{g-i} +d_{i:g-i}\delta_{i:g-i}),
 \end{align*}
where
\begin{align*}
&a=2^{g-3}(2^{g-1}+1), &&c=2^{2g-4},\\
&b_0'=2^{2g-7}, &&d_0'=2^{2g-7},\\
& b_0''=0, &&d_0''= 2^{2g-6}, \\
&b_0^{ram}=2^{g-5}(2^{g-1}+1), && d_0^{ram}=2^{g-5}(2^{g-1}-1),\\
& b_i=2^{g-3}(2^{g-i}-1)(2^{i-1}-1), && d_i=2^{g+i-4}(2^{g-i}-1),\\
& b_{g-i}=2^{g-3}(2^{g-i-1}-1)(2^{i}-1), &&d_{g-i}=2^{2g-i-4}(2^{i}-1), \\
&b_{i:g-i}=2^{g-3}(2^{g-1}-2^{i-1}-2^{g-i-1}+1),&& d_{i:g-i}=2^{g-3}(2^{g-1}-2^{g-i-1}-2^{i-1}).
\end{align*}
\end{thmIntr}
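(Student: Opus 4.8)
The plan is to follow Teixidor's computation of $[\overline{\cT}_g]\in\Pic(\ocM_g)_\bQ$ in \cite{te}, adapted to $\ocR_g$: write each class as an unknown combination of $\lambda,\delta_0',\delta_0'',\delta_0^{ram},\delta_i,\delta_{g-i},\delta_{i:g-i}$, and pin down the coefficients by intersecting with test curves, each intersection number being computed by a count of limit linear series on the admissible double cover over a degenerate fibre. The one genuinely new ingredient is keeping track of the parity of $h^0(\tC,\pi^*L)$. Its control rests on the elementary identity $\pi_*\pi^*L=L\oplus(L\otimes\eta)$ (where $\eta\in J(C)[2]$ defines $\pi$), giving $h^0(\tC,\pi^*L)=h^0(C,L)+h^0(C,L\otimes\eta)$; since a semicanonical pencil $L$ is an even theta characteristic, the parity of $h^0(\tC,\pi^*L)$ equals $q_L(\eta)$, where $q_L$ is the Riemann--Mumford quadratic form on $J(C)[2]$ (the quadratic refinement of the Weil pairing attached to $L$, which has Arf invariant $0$). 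A useful bookkeeping device throughout: the forgetful map $p\colon\ocR_g\to\ocM_g$ satisfies $p^*\lambda=\lambda$, $p^*\delta_0=\delta_0'+\delta_0''+2\delta_0^{ram}$, $p^*\delta_i=\delta_i+\delta_{g-i}+\delta_{i:g-i}$, and $p^{-1}(\cT_g)=\cT^e_g\cup\cT^o_g$ is generically reduced, so $[\overline{\cT}^e_g]+[\overline{\cT}^o_g]=p^*[\overline{\cT}_g]$. This reduces the problem to computing $[\overline{\cT}^e_g]$, and provides consistency checks ($a+c$, $b_0'+d_0'=b_0''+d_0''$, and so on) against Teixidor's formula. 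It also shows at once that $\overline{\cT}^e_g\to\cT_g$ has degree $\#\{\eta\ne 0:q_L(\eta)=0\}=2^{g-1}(2^g+1)-1$ at the generic point, though not the class.

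\textbf{Boundary over $\delta_i$.} For $1\le i\le\lfloor g/2\rfloor$ and each of $\delta_i,\delta_{g-i},\delta_{i:g-i}$, I would take a test curve whose general member is an admissible cover of $C=C_1\cup_p C_2$ with $g(C_1)=i$, $g(C_2)=g-i$, one component general and $p$ varying on the other, carrying the $2$-torsion datum $\eta=(\eta_1,\eta_2)$ with the prescribed pattern of vanishing among $\eta_1,\eta_2$. As in \cite{te}, the relevant limit semicanonical pencils are the limit linear series $\mathfrak g^1_{g-1}$ that are limit theta characteristics, i.e. compatible pairs $(L_1,L_2)$ of theta-characteristic aspects on $C_1,C_2$ with matching vanishing sequences at $p$, and these are enumerated with their Eisenbud--Harris multiplicities. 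The new step is the parity split: on a compact-type curve $J(C)=J(C_1)\times J(C_2)$ the form decomposes as $q_L=q_{L_1}\boxplus q_{L_2}$, so such a limit belongs to $\overline{\cT}^e_g$ iff $q_{L_1}(\eta_1)+q_{L_2}(\eta_2)=0$. Summing the even contributions weighted by the Eisenbud--Harris multiplicities, against the (standard) intersection numbers of the test curve with $\lambda$ and the boundary, yields $b_i,b_{g-i},b_{i:g-i}$.

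\textbf{Boundary over $\delta_0$ and the $\lambda$-coefficient.} For $\delta_0',\delta_0'',\delta_0^{ram}$ I would follow Teixidor's treatment of $\delta_0$: a base change and semistable reduction turn the irreducible $1$-nodal fibre into a compact-type curve, on which one counts limit semicanonical pencils and their parities; the three divisors correspond to whether the $2$-torsion extends across the node in one of two ways ($\delta_0'$, $\delta_0''$) or whether the admissible cover is ramified over the node (a Wirtinger cover, $\delta_0^{ram}$). In the ramified case the identity $h^0(\tC,\pi^*L)=h^0(C,L)+h^0(C,L\otimes\eta)$ must be replaced by the analogous statement on the partial normalization; this is what produces the asymmetry $b_0^{ram}=2^{g-5}(2^{g-1}+1)$ versus $d_0^{ram}=2^{g-5}(2^{g-1}-1)$ --- the $2^{g-1}\pm1$ being the numbers of even and odd theta characteristics on the genus-$(g-1)$ normalization --- as well as the vanishing $b_0''=0$. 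Finally the coefficient $a$ of $\lambda$ comes from one more test curve $B$ with $\lambda\cdot B\ne 0$ and $\delta_i\cdot B=0$ for $i\ge 1$: lift $B$ to the $(2^{2g}-1)$-sheeted $2$-torsion cover, separate the sheets over $B\cap\overline{\cT}_g$ by the parity of $q_L(\eta)$, and solve for $a$ using the already-computed $\delta_0$-coefficients. Assembling everything gives $[\overline{\cT}^e_g]$, and $[\overline{\cT}^o_g]=p^*[\overline{\cT}_g]-[\overline{\cT}^e_g]$ produces $c$ and the $d$'s; one then checks agreement with the consistency relations above and, for $g=5$, with Beauville's description of $\cT^e$ in \cite{be_invent}.

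\textbf{Main difficulty.} The hard part is the parity bookkeeping stratum by stratum, and above all the stratum $\delta_0^{ram}$. Enumerating the limit semicanonical pencils with correct Eisenbud--Harris multiplicities is already the technical heart of \cite{te}; here every such limit must, in addition, be split according to the parity of $h^0$ of the induced limit series on the admissible double cover. On the compact-type strata this is clean via $q_L=q_{L_1}\boxplus q_{L_2}$, but over $\delta_0^{ram}$ the double cover itself acquires a node, pullback of $h^0$ is no longer simply additive, and one must work directly with limit series on a connected, non-compact-type cover; getting those counts and parities right, consistently with the constraint $[\overline{\cT}^e_g]+[\overline{\cT}^o_g]=p^*[\overline{\cT}_g]$, is where the real work lies.
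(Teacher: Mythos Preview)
Your overall strategy---test curves, limit linear series, and parity tracking via the quadratic form $q_L$---is the paper's, and your treatment of the $\delta_i,\delta_{g-i},\delta_{i:g-i}$ coefficients matches it closely (the paper's Proposition~3.1 is exactly your $q_L=q_{L_1}\boxplus q_{L_2}$ decomposition in disguise). But two points deserve comment.

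First, and most importantly, the paper never confronts $\delta_0^{ram}$ head-on. Instead it applies $\pi_*$ to $[\cT_g^e]$: since $\pi_*[\cT_g^e]=\deg(\cT_g^e\to\cT_g)\cdot[\cT_g]$ and the degrees of $\Delta_0',\Delta_0'',\Delta_0^{ram}$ over $\Delta_0$ are $2(2^{2g-2}-1),1,2^{2g-2}$, comparing $\lambda$-coefficients gives $a$ outright (so no extra test curve with $\lambda\cdot B\ne 0$ is needed), and comparing $\delta_0$-coefficients yields one linear relation among $b_0',b_0'',b_0^{ram}$. Then only $d_0'$ and $d_0''$ are computed by test curves, lying entirely inside $\Delta_0'$ and $\Delta_0''$; the pushforward relation solves for $d_0^{ram}$. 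This completely sidesteps the ``main difficulty'' you identify---limit series on the non-compact-type admissible cover over $\Delta_0^{ram}$---and is the real shortcut in the argument.

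Second, the paper's $\delta_0$ test-curve computations do not proceed by semistable reduction to compact type. A terminological slip in your proposal: $\Delta_0''$ is the Wirtinger component and $\Delta_0^{ram}$ is the Beauville admissible (ramified) locus, not the other way round. For $\Delta_0''$ the key input is Harris's result that twisting by the Wirtinger $2$-torsion $(\cO_D)_{-1}$ on an irreducible one-nodal curve always flips the parity of a theta-characteristic; hence every point of $G\cap\cT_g$ lifts into $\cT_g^o$ along the Wirtinger sheet, and $b_0''=0$ drops out. For $\Delta_0'$, the two gluings of a nontrivial $\eta_D\in JD_2$ differ exactly by $(\cO_D)_{-1}$, so by the same parity flip the even and odd contributions are forced to be equal; this symmetry gives $\widetilde G'\cdot[\cT_g^e]=\widetilde G'\cdot[\cT_g^o]$ and hence $b_0'=d_0'$ without any enumeration on the cover. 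Your proposed route via reduction to compact type might be made to work, but it is not what the paper does and is more laborious; the Harris parity-flip lemma is the device that replaces it.
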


\vspace{1.5mm}

\begin{thmIntr}\label{thmB}
For every $g\neq4$ the divisors $\cT^e_g$ and $\cT^o_g$ are irreducible.
\end{thmIntr}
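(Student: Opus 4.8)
The plan is to realize $\cT^e_g$ and $\cT^o_g$ as finite covers of the irreducible divisor $\cT_g$ and thereby reduce the statement to a monodromy computation. Let $\cT_g^\circ\subset\cT_g$ be the dense open subset of curves $[C]$ having a unique semicanonical pencil $L$, which moreover satisfies $h^0(C,L)=2$. Over $\cT_g^\circ$ the forgetful map $\cR_g\to\cM_g$ restricts to an étale cover with fibre $\Pic^0(C)[2]\setminus\{0\}$, and for $\eta$ in this fibre the projection formula gives $h^0(\tC,\pi^*L)=h^0(C,L)+h^0(C,L\otimes\eta)$; hence the cover $\pi$ attached to $\eta$ lies in $\cT^e_g$ or in $\cT^o_g$ according to the parity of $h^0(C,L\otimes\eta)$. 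By the Riemann--Mumford relation the function $q_L(\eta):=h^0(C,L\otimes\eta)+h^0(C,L)\pmod 2$ is a quadratic form on $\Pic^0(C)[2]\cong\mathbb{F}_2^{2g}$ refining the Weil pairing, and its Arf invariant is $h^0(C,L)\equiv 0$; since $h^0(C,L)=2$, the fibre of $\cT^e_g$ over $[C]$ is $q_L^{-1}(0)\setminus\{0\}$ and that of $\cT^o_g$ is $q_L^{-1}(1)$. As $L$ is the unique semicanonical pencil over $\cT_g^\circ$, it is monodromy invariant, so the monodromy group $G:=\operatorname{Im}\bigl(\pi_1(\cT_g^\circ)\to\operatorname{Aut}(\Pic^0(C)[2])\bigr)$ lies in $O(q_L)\cong O^+_{2g}(\mathbb{F}_2)$. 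Because $\cT_g$ is irreducible \cite{te}, every component of $\cT^e_g$ (resp.\ $\cT^o_g$) dominates it, and the number of components equals the number of $G$-orbits on $q_L^{-1}(0)\setminus\{0\}$ (resp.\ on $q_L^{-1}(1)$).

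Now $O^+_{2g}(\mathbb{F}_2)$ acts transitively on $q_L^{-1}(0)\setminus\{0\}$ and on $q_L^{-1}(1)$ — by Witt's extension theorem these are exactly its two orbits on nonzero vectors — so it suffices to show $G=O^+_{2g}(\mathbb{F}_2)$, or merely that $G$ is transitive on each of these two sets. I would prove this by induction on $g$, degenerating a general $[C]\in\cT_g^\circ$ to the boundary of $\overline{\cT_g}\subset\ocM_g$ through the same Eisenbud--Harris limit-linear-series analysis used by Teixidor and underlying the proof of Theorem~\ref{thmA}, but now carrying along the $2$-torsion point $\eta$ together with the value $q_L(\eta)$. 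Pinching a nonseparating cycle (a general point of $\overline{\cT_g}\cap\Delta_0$) contributes the Picard--Lefschetz transvection in the vanishing cycle, while pinching a separating cycle (over $\overline{\cT_g}\cap\Delta_i$) contributes, on the orthogonal decomposition $\Pic^0(C)[2]=\Pic^0(C_1)[2]\perp\Pic^0(C_2)[2]$, the block subgroup $O(q_{L|_{C_1}})\times O(q_{L|_{C_2}})$; here the two factors are full orthogonal groups, by the inductive hypothesis applied to the lower-genus component carrying the semicanonical pencil and by the known monodromy of the moduli of spin curves on the other component. Checking that these transvections and block subgroups together generate $O^+_{2g}(\mathbb{F}_2)$ for all $g\ge 3$ with $g\neq4$ finishes the argument; the base case $g=3$ is immediate, since $\cT_3$ is the hyperelliptic locus, $O^+_6(\mathbb{F}_2)\cong S_8$, and the monodromy on the $8$ Weierstrass points already realizes the full $S_8$, which is transitive on the $35$ elements of $q_L^{-1}(0)\setminus\{0\}$ and on the $28$ elements of $q_L^{-1}(1)$.

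The main obstacle lies in this inductive step: one must first check that $\overline{\cT^e_g}$ and $\overline{\cT^o_g}$ actually contain the boundary strata of $\ocR_g$ needed to feed the recursion — precisely the part of Teixidor's limit-linear-series bookkeeping that has to be redone while tracking which values $q_L$ takes on the relevant $2$-torsion — and then one must assemble enough such monodromy elements to generate the whole orthogonal group. The exclusion of $g=4$ reflects a genuine breakdown of this generation step in that single genus, where $O^+_8(\mathbb{F}_2)$ exhibits exceptional subgroup behaviour, so the case $g=4$ has to be analyzed on its own.
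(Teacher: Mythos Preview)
Your setup is correct and the reformulation via the quadratic form $q_L$ is natural, but what you have is an outline rather than a proof: the inductive step --- producing enough transvections and block orthogonal subgroups inside $G$ and then verifying that they generate $O^+_{2g}(\mathbb{F}_2)$ --- is asserted but not carried out, as you yourself concede. There is also a mismatch in the induction. Irreducibility of $\cT^{e}_{g'}$ and $\cT^{o}_{g'}$ only says that the genus-$g'$ monodromy is \emph{transitive} on each level set of $q_{L'}$, whereas to feed the block $O(q_{L|_{C_1}})\times O(q_{L|_{C_2}})$ into the next step you need the \emph{full} orthogonal group on each factor. So the inductive hypothesis must be strengthened to ``$G=O^+_{2g'}(\mathbb{F}_2)$'' from the outset, and the base case and the spin-moduli input on the non-pencil component re-examined accordingly.

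The paper proceeds quite differently. Rather than computing the monodromy on $2$-torsion with $L$ fixed, it fixes $\eta$ and lets the theta-characteristic vary: one exhibits a single Prym curve $(C,\eta)\in\Delta_1$, with $C$ the join of an elliptic curve and a general hyperelliptic curve at a Weierstrass point and $\eta$ supported on the elliptic tail, and shows it lies in every irreducible component of $\cT^o_g\cap\Delta_1$ (resp.\ $\cT^e_g\cap\Delta_1$). A separate lemma (requiring a complete curve in $\cM_{g-2}$) forces every component of the divisor to meet $\Delta_1$. Local irreducibility at $(C,\eta)$ is then established by moving among the limit semicanonical pencils on $C$ of the relevant parity via the explicit hyperelliptic monodromy on Weierstrass points, together with Teixidor's family connecting limits of different dimensions. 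No group-generation statement is ever needed.

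Finally, your explanation for the exclusion of $g=4$ is not the right one. The divisors $\cT^e_4$ and $\cT^o_4$ \emph{are} irreducible (this is proved in \cite{lnr}), so there is no intrinsic failure of transitivity and triality of $O^+_8(\mathbb{F}_2)$ plays no role. The paper omits $g=4$ only because the lemma guaranteeing that every component meets $\Delta_1$ requires a complete curve in $\cM_{g-2}$, hence $g\ge5$; if your monodromy programme could be completed, there is no a priori reason it should break at $g=4$.
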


A crucial role in the proofs is played by the intersection of $\cT^e_g$ and $\cT^o_g$ with the boundary divisors in $\ocR_g$ of covers of reducible curves. This is the content of \autoref{boundary}. Then in \autoref{proofA} we prove \autoref{thmA} by intersecting $\cT^e_g$ and $\cT^o_g$ with appropriate test curves in $\ocR_g$.

The proof of \autoref{thmB} for $g\geq5$ is given in \autoref{irred}, and combines monodromy arguments with the intersection of $\cT^e_g$ and $\cT^o_g$ with the boundary divisor $\Delta_1\subset\ocR_g$. 
We point out that the irreducibility for $g=3$ can be immediately checked in terms of hyperelliptic curves (\autoref{g=3}), whereas the irreducibility of $\cT_4^e$ and $\cT_4^o$ is obtained in the paper \cite{lnr} as a consequence of the study of the restricted Prym maps ${\cP_4}|_{\cT^e_4}$ and ${\cP_4}|_{\cT^o_4}$.

\vspace{2mm}

\textbf{Acknowledgements.}
The authors developed parts of this work independently as part of their doctoral research, and they would like to thank their respective advisors, Gavril Farkas, Martí Lahoz and Joan Carles Naranjo for their help and guidance. Thanks are also due to Alessandro Verra for suggesting the computation of divisor classes as a tool for the study of the Prym map on these divisors, as well as to the anonymous referees for their detailed comments.

\vspace{1mm}

\section{Preliminaries}

\subsection{The moduli space \texorpdfstring{$\ocR_g$}{Rg}}

This part is a brief review of the Deligne-Mumford compactification $\ocR_g$ and its boundary divisors. We follow the presentation of \cite[Section~1]{fa-lu}; the reader is referred to it for further details.

Let $\cM_g$ be the moduli space of smooth curves of genus $g$, and let $\ocM_g$ be its Deligne-Mumford compactification by stable curves. Following the standard notations, we denote by $\Delta_i$ ($i=0,\ldots,\lfloor g/2\rfloor$) the irreducible divisors forming the boundary $\ocM_g\setminus\cM_g$.
The general point of $\Delta_0$ is an irreducible curve with a single node, whereas the general point of $\Delta_i$ (for $i\geq1$) is the union of two smooth curves of genus $i$ and $g-i$, intersecting transversely at a point.

The classes $\delta_i$ of the divisors $\Delta_i$, together with the Hodge class $\lambda$, are well known to form a basis of the rational Picard group $\Pic(\ocM_g)_\bQ$.

We denote by $\cR_g$ the moduli space of connected double \'etale covers of smooth curves of genus $g$. 
In other words, $\cR_g$ parametrizes isomorphism classes of pairs $(C,\eta)$, where $C$ is smooth of genus $g$ and $\eta\in JC_2\setminus\set{\cO_C}$. 
It comes with a natural forgetful map $\pi:\cR_g\to\cM_g$ which is \'etale of degree $2^{2g}-1$.
Then, the Deligne-Mumford compactification $\ocR_g$ is obtained as the normalization of $\ocM_g$ in the function field of $\cR_g$. This gives a commutative diagram
\[
\xymatrix{
\cR_g\ar[rr]\ar[d]_{\pi}&& \ocR_g\ar[d]\\
\cM_g\ar[rr]&& \ocM_g
}
\]
%\begin{center}
%    \input{Diagram}
%\end{center}

where $\ocR_g$ is normal and the morphism $\ocR_g\to\ocM_g$ (that we will denote by $\pi$ as well) is finite.

Beauville's partial compactification $\widetilde{\cR}_g$ by admissible covers admits a natural inclusion into $\ocR_g$. As proved in \cite{balcasfon}, the variety $\ocR_g$ parametrizes isomorphism classes of \textit{Prym curves} of genus $g$, that is, isomorphism classes of triples $(X,\eta,\beta)$ where:
\begin{itemize}[\textbullet]
    \item $X$ is a quasi-stable curve of genus $g$, i.e.~$X$ is semistable and any two of its exceptional components are disjoint\footnote{Recall that a smooth rational component $E\subset X$ is called \textit{exceptional} if $\sharp E\cap\overline{X\setminus E}=2$, namely if it intersects the rest of the curve in exactly two points.}.
    \item $\eta\in\Pic^0(X)$ is a line bundle of total degree 0, such that $\restr{\eta}{E}=\cO_E(1)$ for every exceptional component $E\subset X$.
    \item $\beta:\eta^{\otimes 2}\to\cO_X$ is generically nonzero over each non-exceptional component of $X$.
\end{itemize}

In case that $\beta$ is clear from the context, by abuse of notation the Prym curve $(X,\eta,\beta)$ will be often denoted simply by $(X,\eta)$.

Then the morphism $\pi:\ocR_g\to\ocM_g$ sends (the class of) $(X,\eta,\beta)$ to (the class of) the \textit{stable model} $\st(X)$, obtained by contraction of the exceptional components of $X$.

Using pullbacks of the boundary divisors of $\ocM_g$, the boundary $\ocR_g\setminus\cR_g$ admits the following description  (see \cite[Examples 1.3 and 1.4]{fa-lu}):

\begin{enumerate}[(1)]
    \item %\cite[Example 1.3]{fa-lu}
    Let $(X,\eta,\beta)$ be a Prym curve, such that $\st(X)$ is the union of two smooth curves $C_i$ and $C_{g-i}$ (of respective genus $i$ and $g-i$) intersecting transversely at a point $P$. In such a case $X=\st(X)$, and giving a 2-torsion line bundle $\eta\in\Pic^0(X)_2$ is the same as giving a nontrivial pair $(\eta_i,\eta_{g-i})\in\left(JC_i\right)_2\times\left(JC_{g-i}\right)_2$.
    
    \noindent Then the preimage $\pi^{-1}(\Delta_i)$ decomposes as the union of three irreducible divisors (denoted by $\Delta_i$, $\Delta_{g-i}$ and $\Delta_{i:g-i}$), which are distinguished by the behaviour of the 2-torsion bundle.
    More concretely, their general point is a Prym curve $(X,\eta)$, where $X=C_i\cup_P C_{g-i}$ is a reducible curve as above and the pair $\eta=(\eta_i,\eta_{g-i})$ satisfies:
    \begin{itemize}[\textbullet]
        \item $\eta_{g-i}=\cO_{C_{g-i}}$, in the case of $\Delta_i$.
        \item $\eta_i=\cO_{C_i}$, in the case of $\Delta_{g-i}$.
        \item $\eta_i\neq\cO_{C_i}$ and $\eta_{g-i}\neq\cO_{C_{g-i}}$, in the case of $\Delta_{i:g-i}$.
    \end{itemize}
    
    \vspace{2mm}
    
    \item %\cite[Example 1.4]{fa-lu}
    Let $(X,\eta,\beta)$ be a Prym curve, such that $\st(X)$ is the irreducible nodal curve obtained by identification of two points $p,q$ on a smooth curve $C$ of genus $g-1$.
    
    \noindent If $X=\st(X)$ and $\nu:C\to X$ denotes the normalization, then $\eta\in\Pic^0(X)_2$ is determined by the choice of $\eta_C=\nu^*(\eta)\in JC_2$ and an identification of the fibers $\eta_C(p)$ and $\eta_C(q)$.
    \begin{itemize}[\textbullet]
        \item If $\eta_C=\cO_C$, there is only one possible identification of $\cO_C(p)$ and $\cO_C(q)$ (namely identification by $-1$) giving a nontrivial $\eta\in\Pic^0(X)_2$. The corresponding element $(X,\eta)$ may be regarded as a Wirtinger cover of $X$.
        \item If $\eta_C\neq\cO_C$, for each of the $2^{2g-2}-1$ choices of $\eta_C$ there are two possible identifications of $\cO_C(p)$ and $\cO_C(q)$. The $2(2^{2g-2}-1)$ corresponding Prym curves $(X,\eta)$ are non-admissible covers of $X$.
    \end{itemize}
    
    \noindent If $X\neq\st(X)$, then $X$ is the union of $C$ with an exceptional component $E$ through the points $p$ and $q$. The line bundle $\eta\in\Pic^0(X)$ must satisfy $\restr{\eta}E=\cO_E(1)$ and $\restr{\eta}C^{\otimes 2}=\cO_C(-p-q)$, which gives $2^{2g-2}$ possibilities. The corresponding Prym curves $(X,\eta)$ give Beauville admissible covers of $\st(X)$.
    
    \noindent It follows that $\pi^{-1}(\Delta_0)=\Delta_0'\cup\Delta_0''\cup\Delta_0^{ram}$, where $\Delta_0'$ (resp. $\Delta_0''$, resp. $\Delta_0^{ram}$) is an irreducible divisor whose general point is a non-admissible (resp. Wirtinger, resp. Beauville admissible) cover. Moreover, $\Delta_0^{ram}$ is the ramification locus of $\pi$ (see \cite[Page 763]{fa-lu} or \cite[Section 3]{balcasfon}).
\end{enumerate}

In terms of divisor classes, we have equalities
\[
\pi^*(\delta_i)=\delta_i+\delta_{g-i}+\delta_{i:g-i},\;\;\;\;\pi^*(\delta_0)=\delta_0'+\delta_0''+2\delta_0^{ram}
\]
where of course $\delta_i,\delta_{g-i},\delta_{i:g-i}$ ($1\leq i\leq\lfloor g/2\rfloor$) and $\delta_0',\delta_0'',\delta_0^{ram}$ are the classes of the boundary divisors of $\ocR_g$. These boundary classes, together with the pullback (also denoted by $\lambda$) of the Hodge class of $\ocM_g$, form a basis of the rational Picard group $\Pic(\ocR_g)_\bQ$.

\subsection{Divisors of Prym semicanonical pencils} If $C$ is a smooth curve of genus $g\geq3$, by \emph{semicanonical pencil} on $C$ we mean an even, effective theta-characteristic. By \emph{dimension} of a theta-characteristic $L$ we mean the (projective) dimension $h^0(C,L)-1$ of the linear system $|L|$. 

The locus of smooth curves admitting a semicanonical pencil is a divisor in $\cM_g$, whose irreducibility was proved in \cite[Theorem~2.4]{te}. In the same paper, the class of its closure $\cT_g$ in $\ocM_g$ was computed.

Since the parity of theta-characteristics remains constant in families (\cite{mu2}), the pullback of $\cT_g$ to $\ocR_g$ decomposes as $\pi^{-1}(\cT_g)=\cT_{g}^e\cup\cT_{g}^o$, where $\cT^e_g$ (resp. $\cT^o_g$) is the closure in $\ocR_g$ of the set
\[
\begin{aligned}
&\set{(C,\eta)\in\cR_g\mid C\text{ has a semicanonical pencil $L$ with $h^0(C,L\otimes\eta)$ even}}\\
(\text{resp. }&\set{(C,\eta)\in\cR_g\mid C\text{ has a semicanonical pencil $L$ with $h^0(C,L\otimes\eta)$ odd}})
\end{aligned}
\]

Note that both $\cT^e_g$ and $\cT^o_g$ have pure codimension 1 in $\ocR_g$, since their union is the pullback by a finite map of an irreducible divisor.
Furthermore, the restriction \[\restr{\pi}{\cT_{g}^e}:\cT_{g}^e\longrightarrow\cT_{g} \;\;\;\text{ (resp. } \restr{\pi}{\cT_{g}^o}:\cT_{g}^o\longrightarrow\cT_{g})
\]is surjective and generically finite of degree $2^{g-1}(2^g+1)-1$ (resp. of degree $2^{g-1}(2^g-1)$). This follows from the fact that a general element of $\cT_g$ has a unique semicanonical pencil (\cite[Theorem 2.16]{te2}), as well as from the number of even and odd theta-characteristics on a smooth curve.

\vspace{0.5mm}

\begin{ex}\label{g=3}
When $g=3$ a semicanonical pencil is the same as a $g^1_2$, and thus the divisor $\cT_3\subset\ocM_3$ equals the hyperelliptic locus $\cH_3$. Of course, the semicanonical pencil on every smooth curve $C\in\cT_3$ is unique. The 63 non-trivial elements of $JC_2$ can be represented by linear combinations of the Weierstrass points $R_1,\ldots,R_8$ as follows:
\begin{itemize}[\textbullet]
    \item Those represented as a difference of two Weierstrass points, $\eta=\cO_C(R_i-R_j)$, form a set of $\binom{8}{2}=28$ elements. Observe that in this case the theta-characteristic $g^1_2\otimes\eta=\cO_C(2R_j+R_i-R_j)=\cO_C(R_i+R_j)$ is odd.
    
    \item Those expressed as a linear combination of four distinct Weierstrass points, $\eta=\cO_C(R_i+R_j-R_k-R_l)$, form a set of $\frac{\binom{8}{4}}{2}=35$ elements\footnote{Division by 2 comes from the fact that any two complementary sets of four Weierstrass points induce the same two-torsion line bundle.}. According to the number of odd and even theta-characteristics on a genus 3 curve, in this case $g^1_2\otimes\eta$ is even.
\end{itemize}

Hence we obtain
\begin{gather*}
\cT_3^o=\text{(closure of)}\set{(C,\eta)\in\cR_3\mid C\text{ hyperelliptic, }\eta=\cO_C(R_i-R_j)} \subset\ocR_3\\
\cT_3^e=\text{(closure of)}\set{(C,\eta)\in\cR_3\mid C\text{ hyperelliptic, }\eta=\cO_C(R_i+R_j-R_k-R_l)}\subset\ocR_3
\end{gather*}
and, since monodromy on hyperelliptic curves acts transitively on tuples of Weierstrass points, it turns out that both divisors $\cT^o_3$ and $\cT^e_3$ are irreducible.
\end{ex}

\vspace{1mm}

\section{Proof of \texorpdfstring{\autoref{thmA}}{Theorem A}}
\label{proofA}

We denote by $[\cT_{g}^e],[\cT_{g}^o]\in\Pic(\ocR_g)_\mathbb{Q}$ the classes in $\ocR_g$ of the divisors $\cT_{g}^e$ and $\cT_{g}^o$. This section is entirely devoted to proving \autoref{thmA}.

First of all, observe that the pullback of the class $[\cT_{g}]\in\Pic(\ocM_g)_\mathbb{Q}$ (computed in \cite[Proposition~3.1]{te}) expresses $[\cT_{g}^e]+[\cT_{g}^o]$ as
\begin{equation*}
  \pi^*[\cT_g] = 2^{g-3} \left( (2^g+1) \lambda -2^{g-3}( \delta_0' +\delta_0''+2\delta_0^{ram})-\sum_{i=1}^{\lfloor g/2\rfloor} (2^{g-i}-1)(2^i-1)(\delta_i+\delta_{g-i}+\delta_{i:g-i}) \right).
\end{equation*}

This relation, together with the linear independence of the basic classes considered in $\ocR_g$, simplifies the computations: if we know a coefficient for one of the divisors, then we also know the coefficient corresponding to the same basic class for the other divisor.
Keeping this in mind, the coefficients of \autoref{thmA} can be determined by essentially following three steps:

\begin{enumerate}[(1)]
  \item\label{step1} The pushforward $\pi_*[\cT_{g}^e]$ easily gives the coefficient $a$ (hence $c$), as well as a relation between $b_0',b_0''$ and $b_0^{ram}$ (hence between $d_0',d_0''$ and $d_0^{ram}$).
  \item\label{step2} We adapt an argument of Teixidor \cite{te} to compute the coefficients $b_i,b_{g-i}$ and $b_{i:g-i}$ for every $i\geq1$: first we describe the intersection of $\cT_{g}^e$ with the boundary divisors $\Delta_i,\Delta_{g-i}$ and $\Delta_{i:g-i}$, and then we intersect $\cT_{g}^e$ with certain test curves.
  \item\label{step3} Finally, $d_0'$ and $d_0''$ are obtained intersecting $\cT_{g}^o$ with test curves contained inside $\Delta_0'$ and $\Delta_0''$ respectively. The relation obtained in \eqref{step1} determines $d_0^{ram}$ as well.
\end{enumerate}

\vspace{3.5mm}

For step~\eqref{step1}, note that on the one hand
\[
 \pi_*[\cT^e_g]=\deg (\cT^e_g \to \cT_g)\cdot[\cT_{g}]=(2^{g-1}(2^g+1)-1)2^{g-3}\left((2^g+1) \lambda -2^{g-3}\delta_0-\ldots \right)
\]
where $\ldots $ is a expression involving only the classes $\delta_1,\ldots,\delta_{\lfloor g/2\rfloor}$. On the other hand
\[ \pi_*[\cT^e_g]=a\pi_*\lambda-b_0'\pi_*\delta_0'-b_0'' \pi_*\delta_0''-b_0^{ram}\pi_*\delta_0^{ram} -\sum_{i=1}^{\lfloor g/2\rfloor} (b_i\pi_*\delta_i+b_{g-i}\pi_*\delta_{g-i} +b_{i:g-i}\pi_*\delta_{i:g-i})
\]
and, since $\pi_*\lambda=\pi_*(\pi^*\lambda)=\deg\pi\cdot\lambda$ and the divisors $\Delta_0',\Delta_0''$ and $\Delta_0^{ram}$ of $\ocR_g$ have respective degrees $2(2^{2g-2}-1),1$ and $2^{2g-2}$ over $\Delta_0\subset\ocM_g$, we obtain
\[
 \pi_*[\cT_{g}^e]=a(2^{2g}-1) \lambda -(2(2^{2g-2}-1)b_0'+b_0''+2^{2g-2}b_0^{ram})\delta_0+\ldots
\]
where $\ldots$ again denotes a linear combination of $\delta_1,\ldots,\delta_{\lfloor g/2\rfloor}$.

Using that $\lambda,\delta_0,\ldots\delta_{\lfloor g/2\rfloor}\in\Pic(\ocM_g)_\mathbb{Q}$ are linearly independent, we can compare the coefficients of $\lambda$ and $\delta_0$. Comparison for $\lambda$ yields
\[
 a=\frac{(2^{g-1}(2^g+1)-1)2^{g-3}(2^g+1)}{2^{2g}-1}=2^{g-3}(2^{g-1}+1),
\]
therefore $c=2^{2g-4}$ due to the relation $a+c=2^{g-3}(2^g+1)$. 

Comparison for $\delta_0$ gives
\[
 (2^{2g-1}-2)b_0'+b_0''+2^{2g-2}b_0^{ram} =2^{2g-6}(2^{g-1}(2^g+1)-1),
\]
or equivalently
\[
 (2^{2g-1}-2)d_0'+d_0''+2^{2g-2}d_0^{ram} =2^{3g-7}(2^g-1).
\]

\vspace{4mm}

In step~\eqref{step2}, the key point is the following description of the intersection of $\cT_{g}^e$ and $\cT_{g}^o$ with the preimages $\pi^{-1}(\Delta_i)$. It is nothing but an adaptation of \cite[Proposition~1.2]{te}:

\begin{prop}
\label{boundary}
For $i\geq1$, the general point of the intersection $\cT_{g}^e\cap\pi^{-1}(\Delta_i)$ (resp. $\cT_{g}^o\cap\pi^{-1}(\Delta_i)$) is a pair $(C,\eta)$ where:
\begin{enumerate}[{\rm(i)}]
  \item\label{bound:item1} The curve $C$ is the union at a point $P$ of two smooth curves $C_i$ and $C_{g-i}$ of respective genera $i$ and $g-i$, and satisfies one of these four conditions ($j=i,g-i$):
  
  \begin{enumerate}
    \item[$\alpha_j)$]  $C_j$ has a 1-dimensional (even) theta-characteristic $L_j$. In this case, the 1-dimensional limit theta-characteristics on $C$ are determined by the aspects $|L_j|+(g-j)P$ on $C_j$ and $|L_{g-j}+2P|+(j-2)P$ on $C_{g-j}$, where $L_{g-j}$ is any even theta-characteristic on $C_{g-j}$.
    \item[$\beta_j)$]  $P$ is in the support of an effective (0-dimensional) theta-characteristic $L_j$ on $C_j$. The aspects of the 1-dimensional limit theta-characteristics on $C$ are $|L_j+P|+(g-j-1)P$ on $C_j$ and $|L_{g-j}+2P|+(j-2)P$ on $C_{g-j}$, where $L_{g-j}$ is any odd theta-characteristic on $C_{g-j}$.
  \end{enumerate}
  
\item\label{bound:item2} $\eta=(\eta_i,\eta_{g-i})$ is a non-trivial 2-torsion line bundle on $C$, such that the numbers $h^0(C_i, L_i\otimes\eta_i)$ and $h^0(C_{g-i}, L_{g-i}\otimes\eta_{g-i})$ have the same (resp. opposite) parity.
\end{enumerate}
\end{prop}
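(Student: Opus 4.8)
The plan is to obtain part~(i) from Teixidor's description of limit theta-characteristics on compact-type curves, and then to decide part~(ii) — which boundary component a given $(C,\eta)$ belongs to — by a one-parameter smoothing argument combined with the deformation-invariance of the parity of a theta-characteristic.

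\emph{Part (i).} Since $\pi\colon\ocR_g\to\ocM_g$ is finite and $\pi^{-1}(\cT_g)=\cT^e_g\cup\cT^o_g$, a dimension count shows that every irreducible component of $\cT^e_g\cap\pi^{-1}(\Delta_i)$ (resp.\ of $\cT^o_g\cap\pi^{-1}(\Delta_i)$) is carried by $\pi$ onto a component of $\cT_g\cap\Delta_i$. Hence the general point $(C,\eta)$ of such a component lies over the general point $[C]$ of a component of $\cT_g\cap\Delta_i$, so \cite[Proposition~1.2]{te} applies and yields the description of $C$ and of its $1$-dimensional limit theta-characteristics in part~(i) (the three boundary divisors $\Delta_i,\Delta_{g-i},\Delta_{i:g-i}$ of $\ocR_g$ being distinguished by whether $\eta_i$, $\eta_{g-i}$, or both are nontrivial). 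Regarding such a limit theta-characteristic as a spin structure on the semistable model $C_i\cup E\cup C_{g-i}$, it restricts on $C_j$ to an honest theta-characteristic of degree $j-1$; writing the Eisenbud--Harris aspect on $C_j$ as this restriction twisted up by a multiple of $P$ to total degree $g-1$ and comparing with the formulas of part~(i) (which on $C_i$ equal $L_i((g-i)P)$ and on $C_{g-i}$ equal $L_{g-i}(iP)$, irrespective of which component is the distinguished one), one identifies these restrictions with the line bundles $L_i$ and $L_{g-i}$ of the statement.

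\emph{Part (ii).} Let $(C,\eta)$ be the general point of a component of $\cT^e_g\cap\pi^{-1}(\Delta_i)$. After a finite base change, take a family $\widehat{\mathcal X}\to B$ over a smooth pointed curve $(B,0)$ with central fibre $C_i\cup E\cup C_{g-i}$ ($E$ exceptional over the node $P$), general fibre a smooth curve $C_t\in\cT_g$ with limit $(C,\eta)$ in $\ocR_g$, carrying a relative spin structure $\zeta$ with $\zeta|_E=\cO_E(1)$, $\zeta|_{C_i}=L_i$, $\zeta|_{C_{g-i}}=L_{g-i}$ and general fibre the semicanonical pencil $L_t$, and a line bundle $\varepsilon$ with $\varepsilon|_{C_t}=\eta_t$ ($2$-torsion, nontrivial), $\varepsilon|_{C_i}=\eta_i$, $\varepsilon|_{C_{g-i}}=\eta_{g-i}$ and $\varepsilon|_E=\cO_E$ (here $E$ is \emph{not} an exceptional component of the Prym curve $(C,\eta)$, so $\varepsilon|_E=\cO_E(1)$ is not imposed). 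Then $\zeta\otimes\varepsilon$ is again a relative spin structure, so by \cite{mu2} the parity of $h^0(C_t,L_t\otimes\eta_t)=h^0(\widehat{\mathcal X}_t,\zeta_t\otimes\varepsilon_t)$ is independent of $t$ and equals $h^0\big(\widehat{\mathcal X}_0,(\zeta\otimes\varepsilon)|_{\widehat{\mathcal X}_0}\big)\bmod 2$; since $(\zeta\otimes\varepsilon)|_E=\cO_E(1)$ imposes no gluing condition at the two nodes of $\widehat{\mathcal X}_0$, this equals
\[
  h^0(C_i,L_i\otimes\eta_i)+h^0(C_{g-i},L_{g-i}\otimes\eta_{g-i})\pmod 2 .
\]
Therefore $(C,\eta)\in\cT^e_g$, i.e.\ $h^0(C_t,L_t\otimes\eta_t)$ is even, exactly when $h^0(C_i,L_i\otimes\eta_i)$ and $h^0(C_{g-i},L_{g-i}\otimes\eta_{g-i})$ have the same parity, and $(C,\eta)\in\cT^o_g$ in the opposite case; together with $\eta\neq\cO_C$, this is part~(ii). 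Running the construction in reverse (as in Teixidor's proof) shows conversely that every $(C,\eta)$ with $C$ as in part~(i) and $\eta$ as in part~(ii) actually occurs, so the loci described are precisely the general points of the components of $\cT^e_g\cap\pi^{-1}(\Delta_i)$ and of $\cT^o_g\cap\pi^{-1}(\Delta_i)$.

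I expect the most delicate step to be the bookkeeping in part~(i): identifying, in each of the cases $\alpha_i,\beta_i,\alpha_{g-i},\beta_{g-i}$, the honest theta-characteristic underlying the Eisenbud--Harris aspect on each component and checking that $\zeta$ restricts to it — reconciling the base loci $|L_j+P|+(g-j-1)P$ and $|L_{g-j}+2P|+(j-2)P$ with the degree constraints $\deg\zeta|_{C_j}=j-1$ — together with the observation that the $\pm1$ gluing ambiguity in $\eta=(\eta_i,\eta_{g-i})$ at $P$ does not affect the parity count, which holds since $(\zeta\otimes\varepsilon)|_E=\cO_E(1)$ absorbs any such gluing.
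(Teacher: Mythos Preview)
Your argument is correct and follows essentially the same route as the paper: part~(i) is reduced to \cite[Proposition~1.2]{te} via the finiteness of $\pi$, and part~(ii) is obtained from a one-parameter smoothing together with deformation-invariance of the parity of $\mathcal L\otimes H$ and the product-of-parities rule on a compact-type curve. The only cosmetic difference is that you package the parity computation via the Cornalba spin-curve formalism (inserting an exceptional $E\cong\bP^1$ and reading off $h^0$ on $C_i\cup E\cup C_{g-i}$), whereas the paper stays in the limit linear series language and simply observes that the aspects of the limit of $\mathcal L\otimes H$ are those of $\mathcal L$ twisted by $\eta=(\eta_i,\eta_{g-i})$; the two formulations are equivalent here.
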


\begin{proof}
First of all, note that item~\eqref{bound:item1} describes the general element of the intersection $\cT_{g}\cap\Delta_i$ in $\ocM_g$: this is exactly \cite[Proposition~1.2]{te}.

Moreover, if $(C,\eta)\in\cT_{g}^e\cap\pi^{-1}(\Delta_i)$ (resp. $(C,\eta)\in\cT_{g}^o\cap\pi^{-1}(\Delta_i)$), then there exists (a germ of) a 1-dimensional family $(\cC\to S,H,\mathcal{L})$ of Prym curves $(\cC_s,H_s)$ endowed with a 1-dimensional theta-characteristic $\mathcal{L}_s$, such that:

\begin{enumerate}
    \item For every $s\neq0$, $(\cC_s,H_s)$ is a smooth Prym curve such that $\mathcal{L}_s\otimes H_s$ is an even (resp. odd) theta-characteristic on $\cC_s$.
    \item The family $(\cC\to S,H)$ specializes to $(C,\eta)=(\cC_0,H_0)$.
\end{enumerate}

The possible aspects of the 1-dimensional limit series of $\mathcal{L}$ on $C=\mathcal{C}_0$ are described by item~\eqref{bound:item1}. Now the result follows from the fact that, on the one hand, the aspects of the limit series of $\mathcal{L}\otimes H$ on $C=\mathcal{C}_0$ are the same aspects as the limit of $\mathcal{L}$, but twisted by $\eta=H_0$; and on the other hand, the parity of a theta-characteristic on the reducible curve $C$ is the product of the parities of the theta-characteristics induced on $C_i$ and $C_{g-i}$. 
\end{proof}

\vspace{1.5mm}

\begin{rem}
For a fixed general element $C$ of the intersection $\cT_{g}\cap\Delta_i$ (i.e.~a curve $C$ satisfying the condition \eqref{bound:item1} above), the number of $\eta=(\eta_i,\eta_{g-i})$ such that $(C,\eta)\in\cT_{g}^e$ can be easily computed. Indeed, the number of $\eta$ giving parities (even,even) is the product of the number of even theta-characteristics on $C_i$ and the number of even theta-characteristics on $C_{g-i}$:
  \[
  2^{i-1}(2^i+1)2^{g-i-1}(2^{g-i}+1)=2^{g-2}(2^i+1)(2^{g-i}+1).
  \]

Similarly, the number of $\eta$ giving parities (odd,odd) is
  \[
  2^{i-1}(2^i-1)2^{g-i-1}(2^{g-i}-1)=2^{g-2}(2^i-1)(2^{g-i}-1).
  \]
From all these, we have to discard the trivial bundle $(\cO_{C_i},\cO_{C_{g-i}})$. Hence the total number of $\eta$ (both even and odd, counted together) is
\[
2^{g-2}(2^i+1)(2^{g-i}+1)+2^{g-2}(2^i-1)(2^{g-i}-1)-1=2^{g-1}(2^g+1)-1,
\]
which indeed coincides with the degree of $\cT_{g}^e$ over $\cT_{g}$.
Of course the configuration of the fiber $\restr{\pi}{\cT_{g}^e}^{-1}(C)$ along the divisors $\Delta_i$, $\Delta_{g-i}$ and $\Delta_{i:g-i}$ will depend on whether $C$ satisfies $\alpha_j)$ or $\beta_j)$.
\end{rem}

\vspace{1.5mm}

\begin{lem}
\label{oddtc}
If $C$ is a smooth curve of genus $g$ and $\eta\in JC_2$ is a non-trivial 2-torsion line bundle, then there are exactly $2^{g-1}(2^{g-1}-1)$ odd theta-characteristics $L$ on $C$ such that $L\otimes\eta$ is also odd.
\end{lem}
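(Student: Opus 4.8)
The plan is to reformulate the lemma as a combinatorial statement about the symplectic $\bZ/2$-vector space $V:=JC_2$, of dimension $2g$, equipped with the Weil pairing $\langle\,,\,\rangle$. Fix an even theta-characteristic $\kappa$ on $C$ — one exists since there are $2^{g-1}(2^g+1)>0$ of them — and define $q\colon V\to\bZ/2$ by $q(x)=h^0(C,\kappa\otimes x)\bmod 2$. By the Riemann--Mumford relation (\cite{mu2}), $q$ is a quadratic form whose associated bilinear form is the Weil pairing, that is $q(x+y)+q(x)+q(y)=\langle x,y\rangle$ for all $x,y\in V$; moreover $q^{-1}(0)$ is in bijection with the set of even theta-characteristics, which has $2^{g-1}(2^g+1)$ elements, so $q$ has Arf invariant $0$. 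Since the odd theta-characteristics are exactly the $\kappa\otimes x$ with $q(x)=1$, and $(\kappa\otimes x)\otimes\eta=\kappa\otimes(x+\eta)$ is odd precisely when $q(x+\eta)=1$, the quantity to be computed is $N=\#\{x\in V:\ q(x)=1\text{ and }q(x+\eta)=1\}$.

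To evaluate $N$ I would use a character sum. Write $4N=\sum_{x\in V}\bigl(1-(-1)^{q(x)}\bigr)\bigl(1-(-1)^{q(x+\eta)}\bigr)$ and expand. The constant term contributes $2^{2g}$; each of the two middle terms contributes $-\sum_x(-1)^{q(x)}=-2^g$, since a quadratic form of Arf invariant $0$ on $(\bZ/2)^{2g}$ satisfies $\sum_x(-1)^{q(x)}=2^g$. For the last term, the quadratic identity gives $q(x)+q(x+\eta)\equiv q(\eta)+\langle x,\eta\rangle\pmod 2$, so $\sum_x(-1)^{q(x)+q(x+\eta)}=(-1)^{q(\eta)}\sum_x(-1)^{\langle x,\eta\rangle}=0$, the last sum vanishing because $x\mapsto\langle x,\eta\rangle$ is a nonzero linear functional (here is where $\eta\neq\cO_C$ enters). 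Hence $4N=2^{2g}-2\cdot 2^g$, i.e.
\[
N=2^{2g-2}-2^{g-1}=2^{g-1}(2^{g-1}-1),
\]
as claimed. Alternatively one can avoid characters: $q(x)\neq q(x+\eta)$ for exactly $2^{2g-1}$ values of $x$, this set is split into two halves by the involution $x\mapsto x+\eta$, so $\#\{q(x)=1,\ q(x+\eta)=0\}=2^{2g-2}$, and $N=\#\{q(x)=1\}-2^{2g-2}=2^{g-1}(2^g-1)-2^{2g-2}$.

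There is essentially no obstacle here: the only genuine input is the classical fact that theta-parity is a quadratic refinement of the Weil pairing of Arf invariant $0$, which I would simply cite, and the rest is a two-line count. The one point requiring a little care is that $N$ must come out independent of whether $\kappa\otimes\eta$ is even or odd, i.e.\ of $q(\eta)\in\{0,1\}$; both arguments above are arranged so that this happens automatically, the $q(\eta)$-dependent term being annihilated by the hypothesis $\eta\neq\cO_C$. That hypothesis is essential — for $\eta=\cO_C$ the count would instead be $\#\{q(x)=1\}=2^{g-1}(2^g-1)$ — and it is used exactly once.
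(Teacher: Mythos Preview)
Your proof is correct and takes a genuinely different route from the paper. The paper argues by averaging: it restricts the difference map $S_g^-(C)\times S_g^-(C)\setminus\Delta\to JC_2\setminus\{\cO_C\}$ and reads off the fiber cardinality as the ratio of the sizes of domain and target, namely $2^{g-1}(2^g-1)\cdot\bigl(2^{g-1}(2^g-1)-1\bigr)/(2^{2g}-1)=2^{g-1}(2^{g-1}-1)$. This is slick but tacitly uses that all fibers over nonzero $\eta$ have the same cardinality (equivalently, transitivity of $\mathrm{Sp}(2g,\bZ/2)$ on $JC_2\setminus\{0\}$), a point the paper leaves implicit. Your approach instead makes the quadratic-form structure explicit and computes the count for a \emph{fixed} $\eta$ via a character sum, so the independence from $\eta$ (and from $q(\eta)$) emerges from the calculation rather than being assumed. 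What the paper's method buys is brevity once one accepts the uniformity; what yours buys is self-containment and a transparent explanation of exactly where the hypothesis $\eta\neq\cO_C$ enters, namely the vanishing of $\sum_x(-1)^{\langle x,\eta\rangle}$.
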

\begin{proof}
This can be checked, for example, by considering how the group $JC_2$ of $2$-torsion line bundles acts on the set $S_g(C)$ of theta characteristics. The associated difference map
\[
S_g(C)\times S_g(C)\longrightarrow JC_2,\quad(M,N)\longmapsto M\otimes N^{-1}
\]
can be restricted to the set of pairs of non-isomorphic odd theta-characteristics, that is,
\[
S^-_g(C)\times S^-_g(C)-\Delta\longrightarrow JC_2-\{\mathcal{O}_C\}.
\]
Since $\#S^-_g(C)=2^{g-1}(2^g-1)$ and $\#JC_2=2^{2g}$, the fibers of this restriction have order
\[
\#S^-_g(C)\cdot(\#S^-_g(C)-1)\cdot(\#JC_2-1)^{-1}
%=2^{g-1}(2^g-1)\cdot(2^{g-1}(2^g-1)-1)\cdot(2^{2g}-1)^{-1}
=2^{g-1}(2^{g-1}-1),
\]
which reflects the number of odd theta-characteristics $L$ such that $L\otimes\eta$ is also odd.
\end{proof}

\vspace{1.5mm}

Now, given an integer $i \geq1$, we proceed to compute the coefficients $b_i$, $b_{g-i}$ and $b_{i:g-i}$ of the class $[\cT_{g}^e]$.
We follow the argument in \cite[Proposition~3.1]{te}.

Fix two smooth curves $C_i$ and $C_{g-i}$ of respective genera $i$ and $g-i$ having no theta-characteristic of positive dimension, as a well as a point $p\in C_i$ lying in the support of no effective theta-characteristic.
We denote by $F$ the curve (isomorphic to $C_{g-i}$ itself) in $\Delta_i\subset\ocM_g$, obtained by identifying $p$ with a variable point $q\in C_{g-i}$.
This curve has the following intersection numbers with the basic divisor classes of $\ocM_g$:
\[
F\cdot\lambda=0,\;F\cdot\delta_j=0\text{ for }j\neq i,\;F\cdot\delta_i=-2(g-i-1)
\]
(for a justification of these intersection numbers, see \cite[page~81]{ha-mu}).

Since the curve $F\subset\ocM_g$ does not intersect the branch locus of the morphism $\pi$, it follows that the preimage $\pi^{-1}(F)$ has $2^{2g}-1$ connected components;
each of them is isomorphic to $F$, and corresponds to the choice of a pair $\eta=(\eta_i,\eta_{g-i})$ of 2-torsion line bundles on $C_i$ and $C_{g-i}$ being not simultaneously trivial.

Let $\widetilde{F_i}$ be one of the components of $\pi^{-1}(F)$ contained in the divisor $\Delta_i$ of $\ocR_g$;
it is attached to an element $\eta=(\eta_i,\cO_{C_{g-i}})$, for a fixed non-trivial $\eta_i\in (JC_i)_2$. 

On the one hand, clearly $\delta_i$ is the only basic divisor class of $\ocR_g$ that intersects $\widetilde{F_i}$.
  The projection formula then says that the number $\widetilde{F_i}\cdot\delta_i$ in $\ocR_g$ equals the intersection $F\cdot\delta_i=-2(g-i-1)$ in $\ocM_g$.
  Therefore,
  \[
  \widetilde{F_i}\cdot[\cT_{g}^e]=\widetilde{F_i}\cdot(a\lambda-b_0'\delta_0'-\ldots)=2(g-i-1)b_i.
  \]

On the other hand, according to \autoref{boundary} an element $(C,\eta)\in\widetilde{F_i}$ belongs to $\cT_{g}^e$ if and only if the two following conditions are satisfied:
  \begin{enumerate}[a)]
    \item\label{conditiona} The point $q\in C_{g-i}$ that is identified with $p$ lies in the support of an effective theta-characteristic $L_{g-i}$. That is, $C$ satisfies $\beta_{g-i})$.
    \item\label{conditionb} The odd theta-characteristic $L_i$ of $C_i$, when twisted by $\eta_i$, remains odd.
  \end{enumerate}
  This gives the intersection number
  \[
  \widetilde{F_i}\cdot[\cT_{g}^e]=(g-i-1)2^{g-i-1}(2^{g-i}-1)2^{i-1}(2^{i-1}-1),
  \]
where we use \autoref{oddtc} to count the possible theta-characteristics $L_i$. 

Comparing both expressions for $\widetilde{F_i}\cdot[\cT_{g}^e]$, it follows that $b_i=2^{g-3}(2^{g-i}-1)(2^{i-1}-1)$.

With a similar argument (considering a connected component of $\pi^{-1}(F)$ contained in $\Delta_{g-i}$ or $\Delta_{i:g-i}$), one can find the numbers
\[
b_{g-i}=2^{g-3}(2^{g-i-1}-1)(2^i-1),\;\;b_{i:g-i}=2^{g-3}(2^{g-1}-2^{i-1}-2^{g-i-1}+1).
\]

\vspace{1mm}

\begin{rem}\label{transvers}
The transversality of these intersections can be shown by looking at the scheme $X^e$ parametrizing pairs $((C,\eta),L)$, where $(C,\eta)$ is a Prym curve and $L$ is a semicanonical pencil on $C$ such that $L\otimes\eta$ is even. If we restrict the forgetful map $X^e\to\cT_{g}^e$ to the component $\widetilde{F_i}$, we obtain a scheme $\mathcal{X}\to{\cT_{g}^e}|_{\widetilde{F}_i}$ which is, by the above discussion, isomorphic to the scheme $\mathfrak{J}^1_{g-1}(\widetilde{F_i})$ of limit linear series of type $\mathfrak{g}^1_{g-1}$ on Prym curves $(C,\eta)\in\widetilde{F_i}$ satisfying conditions \eqref{conditiona} and \eqref{conditionb}. Following the description of this moduli space given in \cite[Theorem~3.3]{eh}, we see that the scheme $\mathfrak{J}^1_{g-1}(\widetilde{F_i})$ splits as the product of two reduced $0$-dimensional schemes, namely
\[
%\mathfrak{J}^1_{g-1}(\widetilde{F_i})\cong
\{(L_{g-i},\,q)\textrm{ as in }\eqref{conditiona}\}\times\{L_i\textrm{ as in }\eqref{conditionb}\}.
\]
Therefore $\mathfrak{J}^1_{g-1}(\widetilde{F_i})\cong\mathcal{X}\to{\cT_{g}^e}|_{\widetilde{F}_i}$ is everywhere reduced and the intersection between $\widetilde{F_i}$ and $\cT_{g}^e$ is transverse. A breakdown of this argument may be found in \cite[Theorem~2.2]{fa}.
\end{rem}

\vspace{1mm}

Now we proceed with step~\eqref{step3}.
We will determine the constants $d_0',d_0'',d_0^{ram}$ of the class $[\cT^o_g]$ by using the test curve of \cite[Example~3.137]{ha-mo}.

Fix a general smooth curve $D$ of genus $g-1$, with a fixed general point $p\in D$. Identifying $p$ with a moving point $q\in D$, we get a curve $G$ (isomorphic to $D$) which lies in $\Delta_0\subset\overline{\mathcal M}_g$. As explained in \cite{ha-mo}, the following equalities hold:
\[
G\cdot\lambda=0, G\cdot\delta_0=2-2g, G\cdot\delta_1=1, G\cdot\delta_i=0\text{ for }i\geq2,
\]
where the intersection of $G$ and $\Delta_1$ occurs when $q$ approaches $p$; in that case the curve becomes reducible, having $D$ and a rational nodal curve as components. 

Combining this information with the known divisor class $[\cT_{g}]$ in $\ocM_g$, we have 
\[
G\cdot[\cT_{g}]=2^{g-3}((g-3)\cdot2^{g-2}+1).
\]

In order to compute $d_0''$, let $\widetilde{G}''$ be the connected component of $\pi^{-1}(G)$ obtained by attaching to every curve $C=D_{pq}$ the 2-torsion line bundle $e=(\cO_D)_{-1}$ (i.e.~$\cO_D$ glued by -1 at the points $p,q$). Indeed $e$ is well defined along the family $G$, so $\widetilde{G}''$ makes sense and is isomorphic to $G$. 

Then:

\begin{itemize}[\textbullet]
  \item By the projection formula, $\widetilde{G}''\cdot\lambda=0$.
  \item Again by projection, $\widetilde{G}''\cdot(\pi^*\delta_0)=2-2g$. Actually, since $\widetilde{G}''\subset \Delta_0''$ and $\widetilde{G}''$ intersects neither $\Delta_0'$ nor $\Delta_0^{ram}$, the following equalities hold:
  \[
  \widetilde{G}''\cdot\delta_0''=2-2g, \;\; \widetilde{G}''\cdot\delta_0'=0=\widetilde{G}''\cdot\delta_0^{ram}.
  \]
  \item We have $\widetilde{G}''\cdot(\pi^*\delta_1)=1$, with $\widetilde{G}''\cdot\delta_1=1$ and $\widetilde{G}''\cdot\delta_{g-1}=0=\widetilde{G}''\cdot\delta_{1:g-1}$. 
  
  \noindent Indeed, the intersection $G\cap\Delta_1$ occurs when $p=q$;
  for that curve, the 2-torsion that we consider is trivial on $D$ but not on the rational component.
  Hence the lift to $\widetilde{G}''$ of the intersection point $G\cap\Delta_1$ gives a point in $\widetilde{G}''\cap\Delta_1$.
  \item It is clear that $\widetilde{G}''\cdot\delta_i=\widetilde{G}''\cdot\delta_{g-i}=\widetilde{G}''\cdot\delta_{i:g-i}=0$ for $i\geq2$.
  \item Since twisting by $e$ changes the parity of any theta-characteristic in any curve of the family $G$ by \cite[Theorems~2.12 and 2.14]{ha}, it follows that all the intersection points of $G$ and $\cT_{g}$ lift to points of $\widetilde{G}''\cap\cT_{g}^o$.
  
\end{itemize}

All in all, we have
\[
2^{g-3}((g-3)\cdot2^{g-2}+1)=\widetilde{G}''\cdot [\cT^o_g]=(2g-2)d_0''-2^{g-3}(2^{g-1}-1)
\]
and solving the equation we obtain $d_0''=2^{2g-6}$.

For the computation of $d_0'$, we consider $\widetilde{G}'=\pi^{-1}(G)\cap\Delta_0'$ in $\ocR_g$.
Note that for an element $(C=D_{pq},\eta)\in\widetilde{G}'$, $\eta$ is obtained by gluing a nontrivial 2-torsion line bundle on $D$ at the points $p,q$. Then:

\begin{itemize}[\textbullet]
  \item $\widetilde{G}'\cdot\lambda=0$ by the projection formula.
  
  \item Again by projection, $\widetilde{G}'\cdot(\pi^*\delta_0)=\deg(\widetilde{G}' \to G)(G\cdot\delta_0)=2(2-2g)(2^{2g-2}-1)$.
  Moreover, since $\widetilde{G}'\subset \Delta_0'$ intersects neither $\Delta_0''$ nor $\Delta_0^{ram}$ it follows that
  \[
  \widetilde{G}'\cdot\delta_0'=2(2-2g)(2^{2g-2}-1), \;\;\widetilde{G}'\cdot\delta_0''=0=\widetilde{G}'\cdot\delta_0^{ram}.
  \]
  
  \item $\widetilde{G}'\cdot(\pi^*\delta_1)=\deg(\widetilde{G}' \to G)(G\cdot\delta_1)=2(2^{2g-2}-1)$.
  We claim that $\widetilde{G}'\cdot\delta_1=0$ and $\widetilde{G}'\cdot\delta_{g-1}=2^{2g-2}-1=\widetilde{G}'\cdot\delta_{1:g-1}$. 
  
  \noindent Indeed, $G\cap\Delta_1$ occurs when $p=q$;
  when such a point is lifted to $\widetilde{G}'$, the 2-torsion is nontrivial on $D$ (by construction).
  This gives $\widetilde{G}'\cdot\delta_1=0$. 
  
  \noindent Moreover, triviality on the rational nodal component will depend on which of the two possible gluings of the 2-torsion on $D$ we are taking; in any case, since $\widetilde{G}'=\pi^{-1}(G)\cap\Delta_0'$ considers simultaneously all possible gluings of all possible non-trivial 2-torsion line bundles on $D$, we have $\widetilde{G}'\cdot\delta_{g-1}=\widetilde{G}'\cdot\delta_{1:g-1}$. This proves the claim.
  
  \item  Of course, $\widetilde{G}'\cdot(\pi^*\delta_i)=\widetilde{G}'\cdot\delta_{g-i}=\widetilde{G}'\cdot\delta_{i:g-i}=0$ whenever $i\geq2$. 
  
  \item Finally, we use again that the parity of a theta-characteristic on a nodal curve of the family $G$ is changed when twisted by $e=(\cO_D)_{-1}$.
  Since the two possible gluings of a non-trivial 2-torsion bundle on $D$ precisely differ by $e$, the intersection numbers $\widetilde{G}'\cdot[\cT_{g}^e]$ and $\widetilde{G}'\cdot[\cT_{g}^o]$ have to coincide, and at the same time add up to the total
  \[
  \widetilde{G}'\cdot(\pi^*[\cT_{g}])=\deg(\widetilde{G}' \to G)(G\cdot[\cT_{g}])=2(2^{2g-2}-1)\cdot 2^{g-3}((g-3)\cdot2^{g-2}+1)
  \]
  by the projection formula. That is, 
  \[
  \widetilde{G}'\cdot[\cT_{g}^e]=\widetilde{G}'\cdot[\cT_{g}^o]=(2^{2g-2}-1)\cdot2^{g-3}((g-3)\cdot2^{g-2}+1).
  \]
\end{itemize}

%the sets $\widetilde{G}'\cap\cT^e_g$ and $\widetilde{G}'\cap\cT^o_g$ will have the same number of points, with the union of both giving the lift of $G\cap\cT_g$ to $\widetilde{G}'$.

Putting this together with the coefficients $d_{g-1}=2^{2g-5}$ and $d_{1:g-1}=2^{g-3}(2^{g-2}-1)$ obtained in step~\eqref{step2}, we get
\begin{align*}
(2^{2g-2}-1)\cdot2^{g-3}((g-3)\cdot2^{g-2}+1)&=\widetilde{G}'\cdot[\cT^o_g]=\\
=2(2g-2)(2^{2g-2}-1)d_0'&-2^{2g-5}(2^{2g-2}-1)-2^{g-3}(2^{g-2}-1)(2^{2g-2}-1)
\end{align*}
and therefore $d_0'=2^{2g-7}$.

Finally, to compute $d_0^{ram}$ we simply combine the relation
\[
(2^{2g-1}-2)d_0'+d_0''+2^{2g-2}d_0^{ram} =2^{g-1}(2^g-1)2^{2g-6}
\]
obtained in step~\eqref{step1} with the coefficients $d_0',d_0''$ just found, to obtain $d_0^{ram}=2^{g-5}(2^{g-1}-1)$. This concludes step~\eqref{step3} and hence the proof of \autoref{thmA}.

%Alternative (incomplete) proof: multiple limit roots

\begin{rem}
The divisor $\cT_g$ has a more natural interpretation in the compactification of the moduli space $\cS_g^{+}$ of even spin curves (i.e.~curves equipped with an even theta-characteristic). In the same way, it would be preferable to discuss the divisors $\cT_{g}^e$ and $\cT_{g}^o$ in a space of curves endowed with both a Prym and a spin structure. In particular, if a good compactification of $\cR_g\times_{\cM_g}\cS_g^{+}$ were constructed and studied, then the divisor classes of $\cT_{g}^e$ and $\cT_{g}^o$ could also be derived from the diagram
\[
\xymatrix{
\cR_g&\cR_g\times_{\cM_g}\cS_g^{+}\ar[l]\ar[r]&\cS_g^{+}
}
\]
and the fact that the class of (the closure in $\ocS_g^{+}$ of) the divisor
\[
 \text{}\set{(C,L)\in  \cS_g^{+} \mid \text{$L$ is a semicanonical pencil on $C$}}
\]
was computed by Farkas in \cite[Theorem~0.2]{fa}. Following the ideas of \cite{ser}, a candidate space for such a compactification is proposed in \cite[Section~2.4]{mphd}, although it remains to check that this space is indeed a smooth and proper Deligne-Mumford stack. Under the assumption that it is, a study of its boundary reveals the same expressions obtained in \autoref{thmA}. Further details can be found in \cite{mphd}.
\end{rem}

\vspace{2mm}

\section{Proof of \texorpdfstring{\autoref{thmB}}{Theorem B}}\label{irred}

In this section we study the irreducibility of the divisors $\cT_{g}^o$ and $\cT_{g}^e$. Recall that for $g=3$, we already saw in \autoref{g=3} that the divisors $\cT_{3}^o$ and $\cT_{3}^e$ are irreducible.
In the general case ($g\geq5)$, our arguments are essentially an adaptation of those of Teixidor in \cite[Section~2]{te}, used to prove the irreducibility of $\cT_{g}$ in $\ocM_g$.

The idea for proving the irreducibility of $\cT_{g}^o$ is the following (the proof for $\cT_{g}^e$ will be similar, but some simplifications will arise). By using \autoref{boundary}, first we will fix a Prym curve $(C,\eta)$ (degeneration of smooth hyperelliptic ones) lying in all the irreducible components of the intersection $\cT^o_g\cap\Delta_1$. This reduces the problem to the local irreducibility of $\cT^o_g$ in a neighborhood of $(C,\eta)$ (after checking that every irreducible component of $\cT^o_g$ intersects $\Delta_1$). For the proof of the local irreducibility of $\cT^o_g$, we can take advantage of the scheme of pairs $((C,\eta),L)$ introduced in \autoref{transvers} and use the following observation:

\vspace{1mm}

\begin{rem}\label{irredneigh}
In a neighborhood of a given point, the local irreducibility of $\cT_{g}^o$ (resp. $\cT_{g}^e$) is implied by the local irreducibility of the scheme $X^o$ (resp. $X^e$) parametrizing pairs $((C,\eta),L)$, where $(C,\eta)$ is a Prym curve and $L$ is a semicanonical pencil on $C$ such that $L\otimes\eta$ is odd (resp. even).
This follows from the surjectivity of the forgetful map $X^o\to\cT_{g}^o$ (resp.  $X^e\to\cT_{g}^e$).
\end{rem}

Then the local irreducibility of $X^o$ (near our fixed $(C,\eta)$) will be argued by showing that monodromy  interchanges the (limit) semicanonical pencils on $C$ that become odd when twisted by the $2$-torsion bundle $\eta$.
Let us recall, for later use in this monodromy argument, some features of theta-characteristics on hyperelliptic curves:

\vspace{1mm}

\begin{rem}\label{hyp}
Let $C$ be a smooth hyperelliptic curve of genus $g$, with Weierstrass points $R_1,\ldots,R_{2g+2}$. 

Then, it is well known (see e.g.~\cite[Proposition~6.1]{mutata}) that the theta-characteristics on $C$ have the form $r\cdot g^1_2+S$, $r$ being its dimension (with $-1\leq r\leq[\frac{g-1}{2}]$) and $S$ being the fixed part of the linear system (which consists of $g-1-2r$ distinct Weierstrass points).
    
Moreover, given a 2-torsion line bundle of the form $\eta=\cO_C(R_i-R_j)$, theta-characteristics changing their parity when twisted by $\eta$ are exactly those for which $R_i,R_j\in S$ (the dimension increases by 1) or $R_i,R_j\notin S$ (the dimension decreases by 1).
\end{rem}

For the proof of \autoref{thmB} we also need the following result, which will guarantee that every irreducible component of $\cT^o_g$ and $\cT^e_g$ intersects the boundary divisor $\Delta_1\subset\ocR_g$:

\begin{lem}\label{divboundary}
Let $\cD\subset\cR_g$ be any divisor, where $g\geq5$. Then the closure $\ocD\subset\ocR_g$ intersects $\Delta_1$ and $\Delta_{g-1}$.
\end{lem}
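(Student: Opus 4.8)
The plan is to reduce this to the known ampleness-type properties of divisor classes on $\ocM_g$. Suppose for contradiction that the closure $\ocD\subset\ocR_g$ is disjoint from $\Delta_1$ (the argument for $\Delta_{g-1}$ is symmetric, exchanging the roles of the two genus strata on a reducible curve, or equivalently by twisting conventions). First I would push forward: since $\pi:\ocR_g\to\ocM_g$ is finite, $\pi(\ocD)$ is a divisor in $\ocM_g$, and $\pi_*[\ocD]$ is an effective divisor class. The key point is that $\ocD\cap\Delta_1=\emptyset$ forces $\pi(\ocD)$ to avoid a suitable ``interior part'' of $\Delta_1\subset\ocM_g$; more precisely, since $\pi^{-1}(\Delta_1)=\Delta_1\cup\Delta_{g-1}\cup\Delta_{1:g-1}$ and a general point of each of these three divisors of $\ocR_g$ lies over a general point of $\Delta_1\subset\ocM_g$, if $\ocD$ meets none of $\Delta_1,\Delta_{g-1},\Delta_{1:g-1}$ then $\pi(\ocD)\cap\Delta_1$ has codimension at least $2$ in $\ocM_g$. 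Thus I am reduced to the statement over $\ocM_g$: \emph{no effective divisor of $\ocM_g$ is disjoint from the interior of $\Delta_1$} for $g\geq5$. (For the $\Delta_{g-1}$ claim I would instead use that $\pi^{-1}(\Delta_1)\setminus\Delta_{g-1}$ maps onto $\Delta_1$ as well, so $\ocD\cap\Delta_{g-1}=\emptyset$ likewise forces $\pi(\ocD)$ off the interior of $\Delta_1$.)

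For the reduced statement, the standard tool is the geometry of $\Delta_1$ itself: a general point of $\Delta_1\subset\ocM_g$ is a curve $E\cup_P C'$ with $E$ of genus $1$ and $C'$ of genus $g-1$, and by varying the moduli of $C'$, the attaching point $P\in C'$, and the elliptic tail $E$, one obtains through a general such point families of curves sweeping out $\Delta_1$. The crucial fact I would invoke is that $\Delta_1$ contains complete rational or elliptic curves through its general point whose union covers a dense subset of $\Delta_1$ — concretely, the family obtained by fixing $C'$, $P$ and the point of attachment on $E$ while letting the $j$-invariant of $E$ vary over a complete base (the ``elliptic tail'' pencil), which is a complete curve $B\subset\Delta_1$. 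If $\ocD$ avoided the interior of $\Delta_1$, then $\pi(\ocD)$ would avoid all such complete curves $B$ for $C',P$ general; but then $\pi(\ocD)\cdot B=0$, whereas I would compute $B\cdot\delta_1<0$ (the elliptic-tail pencil has negative self-intersection with $\Delta_1$) and $B\cdot\lambda=0$, $B\cdot\delta_j=0$ for $j\neq1$, so that an effective class $\pi_*[\ocD]=e\lambda-\sum f_j\delta_j$ with $e,f_j\geq0$ meeting $B$ in zero would force $f_1=0$, i.e.\ $\Delta_1\not\subset\operatorname{supp}\pi(\ocD)$ — which is consistent — and then I would need a \emph{second} test family contradicting $\ocD\cap(\text{interior of }\Delta_1)=\emptyset$.

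A cleaner route, which I would actually adopt, avoids this case analysis: instead of self-intersections, use that $\ocD\cap\Delta_1=\emptyset$ makes the restriction $\mathcal{O}_{\ocR_g}(\ocD)|_{\Delta_1}$ trivial, hence $[\ocD]|_{\Delta_1}=0$ in $\operatorname{Pic}(\Delta_1)_\bQ$. Now $\Delta_1\subset\ocR_g$ is itself (up to finite maps and boundary) a moduli space of Prym curves of lower genus with a marked point — roughly $\ocR_{g-1,1}$ glued along a fibration over $\ocM_{1,1}$ — and for $g\geq5$ the relevant Picard group is generated by classes ($\lambda$, boundary classes, the cotangent at the marked point) all of which are \emph{non-trivial} and moreover the restriction map $\operatorname{Pic}(\ocR_g)_\bQ\to\operatorname{Pic}(\Delta_1)_\bQ$ is injective on the span of $\lambda$ together with the boundary classes appearing in $[\ocD]$; since $[\ocD]$ is a nonzero effective class (being the class of a genuine divisor $\cD\subset\cR_g$, so its $\lambda$- or interior-boundary coefficients cannot all vanish), its restriction is nonzero, a contradiction. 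The main obstacle is precisely making this last injectivity/non-vanishing statement precise: one must identify $\Delta_1$ (or a finite cover of it) explicitly as a product/fibration of lower-dimensional moduli of Prym curves, read off generators of its rational Picard group, and check that the restriction of the basis $\{\lambda,\delta_0',\delta_0'',\delta_0^{ram},\delta_i,\delta_{g-i},\delta_{i:g-i}\}$ is injective enough to detect any effective class. This is exactly where the hypothesis $g\geq5$ enters, since for small $g$ the boundary structure of $\Delta_1$ degenerates and extra relations in its Picard group appear.
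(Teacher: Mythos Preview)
Both of your routes have genuine gaps.

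In the first route, the reduction to $\ocM_g$ does not go through. You want to infer from $\ocD\cap\Delta_1=\emptyset$ (with $\Delta_1\subset\ocR_g$) that $\pi(\ocD)$ avoids the interior of $\Delta_1\subset\ocM_g$. But $\pi^{-1}(\Delta_1\subset\ocM_g)=\Delta_1\cup\Delta_{g-1}\cup\Delta_{1:g-1}$, and nothing prevents $\ocD$ from meeting $\Delta_{g-1}$ or $\Delta_{1:g-1}$; if it does, $\pi(\ocD)$ meets the interior of $\Delta_1\subset\ocM_g$ and your reduction collapses. You only get the $\ocM_g$ statement when $\ocD$ misses all three components simultaneously, which is strictly stronger than what the lemma asserts (and your parenthetical about $\Delta_{g-1}$ has the same defect). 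Separately, the elliptic-tail pencil has $B\cdot\lambda=1$ and $B\cdot\delta_0=12$, not zero, so the test-curve arithmetic is off even before this issue.

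The second route is, as you acknowledge, incomplete: the injectivity of the restriction $\Pic(\ocR_g)_\bQ\to\Pic(\Delta_1)_\bQ$ on the relevant subspace is exactly the content, and it is not established. In fact the full restriction map is certainly not injective, so you would need to show that the particular class $[\ocD]$ is not in the kernel, which requires more than effectivity.

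The paper's proof is quite different and works directly inside $\ocR_g$. It builds a complete threefold $T\subset\Delta_1\cap\Delta_{g-1}$: over a complete curve $B\subset\cM_{g-2}$ (this is where $g\geq5$ is used), one glues two fixed elliptic tails $E_1,E_2$ at two variable points of the genus-$(g-2)$ curve, placing the nontrivial $2$-torsion on $E_1$ only, so that $T=\bigcup_{b\in B}\Gamma_b^2$. The only input about $[\ocD]$ is that its $\lambda$-coefficient is nonzero, which \emph{does} come from pushing forward to $\ocM_g$ and invoking a result of Marcucci--Naranjo--Pirola. With this, one checks that $\restr{[\ocD]}{T}$ is not a multiple of the diagonal surface $S\subset T$, hence $\ocD$ meets $T\setminus S\subset\Delta_1\cap\Delta_{g-1}$.
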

\begin{proof}
We borrow the construction from \cite[Section~4]{mnp}, where (a stronger version of) the corresponding result for divisors in $\cM_g$ is proved.

Fix a complete integral curve $B\subset\cM_{g-2}$ (whose existence is guaranteed by the assumption $g\geq5$), two elliptic curves $E_1,E_2$ and a certain 2-torsion element $\eta\in JE_1\setminus\{0\}$. If $\Gamma_b$ denotes the smooth curve of genus $g-2$ corresponding to $b\in B$, one defines a family of Prym curves parametrized by $\Gamma_b^2$ as follows.

If $(p_1,p_2)\in\Gamma_b^2$ is a pair of distinct points, glue to $\Gamma_b$ the curves $E_1$ and $E_2$ at the respective points $p_1$ and $p_2$ (this is independent of the chosen point on the elliptic curves). To this curve attach a 2-torsion bundle being trivial on $\Gamma_b$ and $E_2$, and restricting to $\eta$ on $E_1$.

To an element $(p,p)\in\Delta_{\Gamma_b^2}\subset\Gamma_b^2$, we attach the curve obtained by gluing a $\bP^1$ to $\Gamma_b$ at the point $p$, and then $E_1,E_2$ are glued to two other points in $\bP^1$. Of course, the 2-torsion bundle restricts to $\eta$ on $E_1$, and is trivial on the remaining components.

Moving $b$ in $B$, this construction gives a complete threefold $T=\underset{b\in B}{\bigcup}\Gamma_b^2$ contained in $\Delta_1\cap\Delta_{g-1}$. Let also $S=\underset{b\in B}{\bigcup}\Delta_{\Gamma_b^2}$ be the surface in $T$ given by the union of all the diagonals; it is the intersection of $T$ with $\Delta_2$. Then, the following statements hold:

\begin{enumerate}
    \item $\restr{\delta_1}{S}=0$ and $\restr{\delta_{g-1}}{S}=0$ (the proof of \cite[Lemma~4.2]{mnp} is easily translated to our setting).
    \item $\restr{\lambda}{\Delta_{\Gamma_b^2}}=0$ for every $b\in B$, since all the curves in $\Delta_{\Gamma_b^2}$ have the same Hodge structure.
    \item If $a\in\bQ$ is the coefficient of $\lambda$ for the class $[\ocD]\in\Pic(\ocR_g)_\bQ$, then $a\neq0$. Indeed, $2^{2g-1}a\in\bQ$ is the coefficient of $\lambda$ for the class $[\overline{\pi(\cD)}]\in\Pic(\ocM_g)_\bQ$; then \cite[Remark~4.1]{mnp} proves the claim.
\end{enumerate}

These are the key ingredients in the original proof of \cite[Proposition~4.5]{mnp}. The same arguments there work verbatim in our case and yield the analogous result: $\restr{[\ocD]}{T}\neq m\cdot S$ for every $m\in\bQ$.

In particular, the intersection $\ocD\cap T$ is non-empty (and not entirely contained in $S$).
\end{proof}

\vspace{0.5mm}

\begin{prop}\label{irredTog}
For $g\geq5$, the divisor $\cT_{g}^o$ is irreducible.
\end{prop}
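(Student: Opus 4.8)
The plan is to carry out the strategy sketched before \autoref{irredneigh}, adapting Teixidor's proof of the irreducibility of $\cT_g$ in \cite[Section~2]{te}. First, since $\cT^o_g$ is by definition the closure in $\ocR_g$ of a divisor of $\cR_g$, each of its irreducible components restricts to a divisor of $\cR_g$; applying \autoref{divboundary} to each of these shows that every irreducible component of $\cT^o_g$ meets the boundary divisor $\Delta_1$.

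Next, I would analyse $\cT^o_g\cap\Delta_1$ by means of \autoref{boundary} with $i=1$. The cases $\alpha_1),\beta_1)$ are vacuous (a curve of genus $1$ has no theta-characteristic of positive dimension, and its unique effective theta-characteristic $\cO_E$ has empty base locus), so the generic behaviour along $\cT^o_g\cap\Delta_1$ is of type $\alpha_{g-1})$ or $\beta_{g-1})$; since a general curve of genus $g-1$ has no theta-characteristic of positive dimension, a dimension count ($\dim(\cT^o_g\cap\Delta_1)=3g-5$) shows these are the only two loci that can be components. I would then single out a Prym curve $(C_0,\eta_0)$ of ``hyperelliptic type'': $C_0=E\cup_R C'$ with $C'$ hyperelliptic of genus $g-1$, $R$ a Weierstrass point of $C'$, $E$ a general elliptic curve glued at $R$, and $\eta_0=(\eta_E,\cO_{C'})$ with $\eta_E\in(JE)_2\setminus\{0\}$. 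Such a point is a degeneration of smooth hyperelliptic Prym curves; it lies in $\cT^o_g$, because the hyperelliptic $C'$ carries both the $1$-dimensional theta-characteristics and the effective theta-characteristics through $R$ required by \autoref{boundary}\eqref{bound:item1}, while condition \eqref{bound:item2} is either automatic or fixed by the choice of auxiliary theta-characteristic on $E$; and, because degenerating $C'$ to a hyperelliptic curve inside the irreducible divisor $\cT_{g-1}$ (with $R$ on the support of the relevant theta-characteristic) stays inside $\cT^o_g\cap\Delta_1$, the point $(C_0,\eta_0)$ lies in the closure of every component of $\cT^o_g\cap\Delta_1$. As for every component $Z$ of $\cT^o_g$ the components of $\overline Z\cap\Delta_1$ are (by purity) components of $\cT^o_g\cap\Delta_1$, it follows that $(C_0,\eta_0)$ lies in every irreducible component of $\cT^o_g$.

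It then remains to show that $\cT^o_g$ is locally irreducible (unibranch) at $(C_0,\eta_0)$, and by \autoref{irredneigh} it suffices to prove the same for the scheme $X^o$ of pairs $((C,\eta),L)$. By the transversality discussion of \autoref{transvers}, near $(C_0,\eta_0)$ the forgetful map $X^o\to\cT^o_g$ is finite and $X^o$ is reduced, so local irreducibility of $X^o$ amounts to the monodromy of the family of semicanonical pencils over (the smooth locus of) a deformation of $(C_0,\eta_0)$ acting transitively on the fibre of $X^o$ over $(C_0,\eta_0)$ — i.e.\ on the set of limit semicanonical pencils $L$ on $C_0$ with $L\otimes\eta_0$ odd. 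Through limit linear series this fibre is combinatorial: by \autoref{boundary} it splits into the $\alpha_{g-1})$- and $\beta_{g-1})$-limits, which by \autoref{hyp} are governed by the $1$-dimensional, resp.\ effective $0$-dimensional, theta-characteristics $r\cdot g^1_2+S$ of the hyperelliptic curve $C'$ (together with an auxiliary theta-characteristic on $E$), the condition $L\otimes\eta_0$ odd being controlled on $C'$ through the base-locus criterion of \autoref{hyp}. The hyperelliptic monodromy acts transitively on the Weierstrass points of $C'$, hence on the admissible sets $S$ of a given cardinality, and letting a Weierstrass point of $C'$ run into $R$ — the passage from an $\alpha_{g-1})$- to a $\beta_{g-1})$-limit, matching the dimension jump recorded in \autoref{hyp} — links the two types; so the whole fibre is a single orbit, $X^o$ is locally irreducible at the points over $(C_0,\eta_0)$, and therefore $\cT^o_g$ is irreducible.

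The main obstacle is this last monodromy step. One must choose the deformation of $(C_0,\eta_0)$ so that its monodromy is visibly large enough, identify precisely which data on $C'$ and $E$ a limit semicanonical pencil with $L\otimes\eta_0$ odd reduces to (checking that no spurious limit series appear and that the parity condition cuts out exactly the expected combinatorial set), and then verify transitivity across limits of different dimension $r$ and of both types $\alpha_{g-1}),\beta_{g-1})$. The surrounding reductions — via \autoref{divboundary}, \autoref{boundary}, \autoref{irredneigh} and the dimension counts — are faithful adaptations of \cite[Section~2]{te}.
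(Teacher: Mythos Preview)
Your overall strategy matches the paper's, but the monodromy step has two genuine gaps that the paper has to work around and that your sketch does not address.

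First, hyperelliptic monodromy alone cannot make the fibre of $X^o$ over $(C_0,\eta_0)$ a single orbit. On a nearby smooth hyperelliptic curve of genus $g$ the relevant semicanonical pencils are $r\cdot g^1_2+S$ with $|S|=g-1-2r$, and monodromy permutes the Weierstrass points but preserves $|S|$; so limits of different dimension $r$ are never exchanged this way, and your claim that ``the whole fibre is a single orbit'' does not follow. The paper's remedy is structural rather than combinatorial: one first observes that the locus $\alpha$ (your $\alpha_{g-1})$) is irreducible and that the intersection of $\cT^o_g$ with $\Delta_1$ along $\alpha$ is \emph{reduced}, so $\alpha$ lies in a unique irreducible component $\cT^o_{g,\alpha}$ of $\cT^o_g$. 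Since the forgetful map $X^o_\alpha\to\cT^o_{g,\alpha}$ is birational (a general point of $\cT^o_{g,\alpha}$ carries a unique semicanonical pencil, by \cite[Theorem~2.16]{te2}), monodromy over the whole component $\cT^o_{g,\alpha}$ --- not just over the hyperelliptic locus --- interchanges all $\alpha$-type limits regardless of $r$. Hyperelliptic monodromy is then only used to push each $\beta$-type limit to some $\alpha$-type limit of the same dimension, by swapping the limit Weierstrass point $R_3\in E$ with an appropriate limit Weierstrass point on $C'$ (not by ``letting a Weierstrass point of $C'$ run into $R$'': the node $R$ is \emph{not} a limit Weierstrass point of the nearby smooth curves).

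Second, even this $\beta\to\alpha$ move fails in one case: when $g\equiv 3\pmod 4$, the semicanonical pencil $\frac{g-1}{2}\cdot g^1_2$ has empty fixed part $S$, so there is no Weierstrass point available to swap with $R_3$, and this $\beta$-limit is stuck. The paper handles it by a further degeneration of $C'$ to a chain $E'\cup_{Q} C''$ and an explicit one-parameter family of limit pencils (taken from \cite[Proposition~2.4]{te}) that carries this exceptional limit to one of lower dimension. Without this extra step your argument is incomplete precisely for $g\equiv 3\pmod 4$.
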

\begin{proof}
According to \autoref{boundary}, the intersection $\cT_{g}^o\cap\Delta_1$ consists of two loci $\alpha$ and $\beta$. The general point of each of these loci is the union at a point $P$ of a Prym elliptic curve $(E,\eta)$ and a smooth curve $C_{g-1}$ (with trivial line bundle) of genus $g-1$, such that:
\begin{itemize}[\textbullet]
    \item In the case of $\alpha$, the curve $C_{g-1}$ has a 1-dimensional theta-characteristic, i.e, $C_{g-1}\in\cT_{g-1}$ in $\ocM_{g-1}$. Moreover, there is exactly one limit semicanonical pencil on $C_{g-1}\cup_PE$ changing parity when twisted by the 2-torsion bundle; it induces the theta-characteristic $\eta$ on $E$.
    
    \noindent It follows that $\alpha$ is irreducible (by irreducibility of $\cT_{g-1}$) and the intersection of $\cT^o_g$ and $\Delta_1$ along $\alpha$ is reduced. In particular, there is a unique irreducible component of $\cT^o_g$ (that we will denote by $\cT^o_{g,\alpha}$) intersecting $\Delta_1$ along the whole locus $\alpha$.
    
    \item In the case of $\beta$, $P$ is in the support of a 0-dimensional theta-characteristic on $C_{g-1}$.
    Again, there is a unique limit semicanonical pencil changing parity, with induced theta-characteristic $\cO_E$ on $E$.
\end{itemize}

Now we consider a reducible Prym curve $(C,\eta)\in\Delta_1$ constructed as follows: $C$ is the join of an elliptic curve $E$ and a general smooth hyperelliptic curve $C'$ of genus $g-1$ at a Weierstrass point $P\in C'$, whereas the line bundle $\eta$ is trivial on $C'$. Note that $(C,\eta)$ is the general point of the intersection $\widetilde{\cH}_g\cap\Delta_1$, where $\widetilde{\cH}_g\subset\cT_{g}^o$ denotes the locus of hyperelliptic Prym curves whose 2-torsion bundle is a difference of two Weierstrass points.

Of course $(C,\eta)$ belongs to $\alpha$ and $\beta$; 
we claim that it actually belongs to any component of $\beta$.

To prove this claim, consider any irreducible component of $\beta$, and fix a general element of it. This general element admits the description given above: let us denote by $X$ (written $C_{g-1}$ above) the irreducible component of genus $g-1$, and by $Q_X\in X$ the point connecting $X$ with the elliptic component. Recall that $Q_X$ lies in the support of a 0-dimensional theta-characteristic $L_X$ on $X$.

We deform the pair $(X,L_X)$ to a pair $(C',L)$ formed by our hyperelliptic curve $C'$ and a $0$-dimensional theta-characteristic $L$ on it. According to the description of \autoref{hyp}, under this deformation the point $Q_X\in X$ specializes to a Weierstrass point $Q\in C'$.

Therefore, our irreducible component of $\beta$ contains a Prym curve which is the union of $C'$ (with trivial 2-torsion) and a Prym elliptic curve $(E',\eta')$ at the Weierstrass point $Q\in C'$.
Since the monodromy on hyperelliptic curves acts transitively on the set of Weierstrass points, we may replace $Q$ by our original Weierstrass point $P$ without changing the component of $\beta$.
Using that $\ocR_1$ is connected we can also replace $(E',\eta')$ by $(E,\eta)$. This proves the claim.

Now, to prove the irreducibility of $\cT_{g}^o$ we argue as follows: since $\cT_{g}^o$ has pure codimension 1, we know by \autoref{divboundary} that each of its irreducible components intersects $\Delta_1$.
As our point $(C,\eta)$ belongs to all the irreducible components of $\cT_{g}^o\cap\Delta_1$, it suffices to check the local irreducibility of $\cT_{g}^o$ in a neighborhood of $(C,\eta)$. 

To achieve this, in view of \autoref{irredneigh} we will check the local irreducibility of the scheme $X^o$. In other words, we need to study the \emph{limit semicanonical pencils on $C$ changing parity when twisted by $\eta$}. We do this in the rest of the proof, by checking that monodromy on $\widetilde{\cH}_g\subset\cT_{g}^o$ connects any \emph{limit semicanonical pencil changing parity on $(C,\eta)$} of type $\beta$ with one of type $\alpha$, and checking that limits of type $\alpha$ are also permuted by monodromy on $\cT^o_{g,\alpha}$.

Let $R_1,R_2,R_3$ be the points on $E$ differing from $P$ by 2-torsion, and let $R_4,\ldots,R_{2g+2}$ be the Weierstrass points on $C'$ that are different from $P$: reordering if necessary, we assume 
$\restr{\eta}{E}=\cO_E(R_1-R_2)$. Note that $R_1,\ldots,R_{2g+2}$ are the limits on $C$ of Weierstrass points on nearby smooth hyperelliptic curves, since they are the ramification points of the limit $g^1_2$ on $C$.

With this notation, arguing as in the proof of \autoref{boundary}, the possible aspects on $E$ of a \emph{limit semicanonical pencil changing parity on $(C,\eta)$} are:
\begin{itemize}[\textbullet]
    \item Those of type $\alpha$ have aspect on $E$ differing from the even theta-characteristic $\eta$ by $(g-1)P$, hence $\cO_E(R_3+(g-2)P)=\cO_E(R_1+R_2+(g-3)P)$.
    \item Those of type $\beta$ have aspect differing from the odd theta-characteristic $\cO_E$ by $(g-1)P$, hence  $\cO_E((g-1)P)=\cO_E(R_1+R_2+R_3+(g-4)P)$.
\end{itemize}

Given a family of \textit{semicanonical pencils changing parity} on nearby smooth curves of $\widetilde{\cH}_g$, we can distinguish the type of its limit on $C$ by knowing how many of the $g-1-2r$ fixed Weierstrass points in the moving theta-characteristic (recall \autoref{hyp}) specialize to $E$. 
If this number is 0 or 3 (resp. 1 or 2), then our limit is of type $\beta$ (resp. of type $\alpha$).

Hence, after using monodromy on smooth hyperelliptic curves to interchange the (limit) Weierstrass point $R_3$ with an appropriate (limit) Weierstrass point on $C'$, we obtain that monodromy on $\widetilde{\cH}_g\subset\cT_{g}^o$ interchanges any \textit{limit semicanonical pencil changing parity} of type $\beta$ with one of type $\alpha$ (of the same dimension). The only possible exception is a limit of $\frac{g-1}{2}\cdot g^1_2$ when $g\equiv3\pmod{4}$, since in that case there are no fixed points to interchange with $R_3$.

In addition, monodromy on $\cT^o_{g,\alpha}$ (the unique irreducible component of $\cT^o_g$ containing $\alpha$) acts transitively on the set of \textit{limit semicanonical pencils changing parity} of type $\alpha$. Indeed, if $X^o_\alpha$ denotes the preimage of $\cT^o_{g,\alpha}$ in $X^o$, then the forgetful map $X^o_\alpha\to \cT^o_{g,\alpha}$ is birational (by \cite[Theorem 2.16]{te2}) and has finite fibers; consequently $X^o_\alpha$ is irreducible, which proves the assertion.

Therefore to conclude the proof of the local irreducibility of $X^o$ near $(C,\eta)$ it only remains to show that, if $g\equiv3\pmod{4}$, the monodromy on $\cT_{g}^o$ interchanges the limit of $\frac{g-1}{2}\cdot g^1_2$ with a limit of theta-characteristics of lower dimension.
This can be achieved exactly with the same family of limit theta-characteristics as in \cite[Proposition~2.4]{te} for certain reducible curves; let us include a few words about the geometry of this family.

First, one degenerates $C'$ to a reducible hyperelliptic curve obtained by identifying a point $P'\in E'$ ($E'$ elliptic curve) with a Weierstrass point $Q\in C''$ ($C''\in\cM_{g-2}$ hyperelliptic), such that the Weierstrass point $P\in C'$ specializes to a point of $E'$. This naturally induces a degeneration $C_{P'}$ of our Prym curve $(C,\eta)$, in which the 2-torsion bundle is non-trivial only along the component $E$. We will denote by $R_4,R_5$ the points of $E'$ differing by $2$-torsion from $P$ and $P'$ (limits of the corresponding Weierstrass points of $C'$).

Consider the family of Prym curves $C_X$ obtained by glueing $E$ (the only component with non-trivial 2-torsion) and $E'$ at $P$, and by identifying $Q\in C''$ with a variable point $X\in E'$. Note that for $X=P'$, we indeed recover our deformation $C_{P'}$ of $(C,\eta)$. Every such Prym curve $C_X$ can be equipped with a \textit{limit semicanonical pencil changing parity} of aspects $\cO_E((g-1)P)$ on $E$, $\cO_{E'}(Q+(g-2)X)$ on $E'$ and $\cO_{C''}((g-1)Q)$ on $C''$.

On $C_{P'}$, this corresponds to the limit of $\frac{g-1}{2}\cdot g^1_2$ on nearby smooth Prym curves of $\widetilde{\cH}_g$; on the other hand, $C_{R_5}$ is also hyperelliptic and we have a limit of theta-characteristics of the form $\frac{g-5}{2}\cdot g^1_2+R_1+R_2+R_3+R_4$.

Therefore, monodromy on $\cT^o_g$ moves the limit of $\frac{g-1}{2}\cdot g^1_2$ to a limit theta-characteristic of type $\beta$ of lower dimension, which concludes the proof.
\end{proof}

\vspace{1mm}

\begin{prop}\label{irredTeg}
For $g\geq5$, the divisor $\cT_{g}^e$ is irreducible.
\end{prop}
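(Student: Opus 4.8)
The plan is to follow the proof of \autoref{irredTog} step by step, exploiting the fact that for the even divisor the argument collapses onto a single boundary locus. Concretely, I would (i) describe $\cT_{g}^e\cap\Delta_1$ via \autoref{boundary}, (ii) check it is irreducible and that $\cT_{g}^e$ meets $\Delta_1$ transversally, and (iii) deduce the irreducibility of $\cT_{g}^e$ by combining \autoref{divboundary} with a local-irreducibility statement proved through the scheme $X^e$ of \autoref{irredneigh}.

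For (i): a general point of any component of $\cT_{g}^e\cap\Delta_1$ is a Prym curve $(C,\eta)$ with $C=E\cup_P C_{g-1}$, $E$ elliptic, $C_{g-1}$ smooth of genus $g-1$, and $\eta=(\eta_E,\cO_{C_{g-1}})$ for some $\eta_E\in JE_2\setminus\{\cO_E\}$. The crucial elementary observation is that on $E$ the unique \emph{odd} theta-characteristic is the trivial bundle $\cO_E$, whereas the three non-trivial $2$-torsion bundles are the \emph{even} ones. Feeding this into \autoref{boundary} for $i=1$ -- where only the subcases $\alpha_{g-1}$ and $\beta_{g-1}$ can occur, since $E$ carries neither a theta-characteristic of positive dimension nor an effective theta-characteristic of positive degree -- and inspecting the parities in item~\eqref{bound:item2}, one sees that $\beta_{g-1}$ never produces points of $\cT_{g}^e$ (the theta-characteristic induced on $E$ is forced to be $\cO_E$, which becomes even after twisting by $\eta_E$), while $\alpha_{g-1}$ does, precisely for the two choices of even theta-characteristic $L_1\neq\eta_E$ on $E$ (the remaining choice $L_1=\eta_E$ giving the unique limit pencil of $\cT_{g}^o$ of type $\alpha$). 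Hence $\cT_{g}^e\cap\Delta_1$ is exactly the locus of Prym curves $E\cup_P C_{g-1}$ with $C_{g-1}\in\cT_{g-1}$, and each such curve carries exactly two limit semicanonical pencils preserving parity when twisted by $\eta$, distinguished by the even theta-characteristic they induce on $E$.

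For (ii): since $\cT_{g-1}\subset\ocM_{g-1}$ is irreducible by \cite[Theorem~2.4]{te} (valid as $g-1\geq4$), the locus just described is irreducible; and the two admissible choices of $L_1$ are interchanged by the monodromy of $\cR_1$ (the stabiliser of $\eta_E$ inside the $S_3$ acting on $JE_2$ acts transitively on the remaining two non-trivial $2$-torsion bundles), so $X^e$ restricts over $\cT_{g}^e\cap\Delta_1$ to an \emph{irreducible} double cover. That this double cover is reduced -- equivalently, that $\cT_{g}^e$ meets $\Delta_1$ transversally -- is checked exactly as in \autoref{transvers} using the description of limit $\mathfrak{g}^1_{g-1}$'s in \cite[Theorem~3.3]{eh}.

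For (iii): as $\cT_{g}^e$ has pure codimension $1$, every irreducible component meets $\Delta_1$ by \autoref{divboundary}; being irreducible of codimension $1$ in $\Delta_1$, the locus $\cT_{g}^e\cap\Delta_1$ is then contained in every component, so that every component contains a fixed general point $(C,\eta)$ of $\cT_{g}^e\cap\Delta_1$. It therefore suffices to prove that $\cT_{g}^e$ is locally irreducible at $(C,\eta)$; by \autoref{irredneigh} this follows once one knows that the two parity-preserving limit semicanonical pencils on $C$ lie on one and the same irreducible component of $X^e$, which is exactly the content of (ii). Hence $\cT_{g}^e$ is irreducible. The only ingredient demanding genuine care is the transversality assertion in (ii), and it is no harder than the corresponding point in \autoref{transvers}; the gain over \autoref{irredTog} is that the parity bookkeeping of \autoref{boundary} eliminates the locus $\beta$, so that neither the hyperelliptic-monodromy argument nor the exceptional case $g\equiv3\pmod{4}$ is needed.
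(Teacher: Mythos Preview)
Your strategy is the paper's, with one simplification and one slip. The simplification: instead of degenerating the $(g-1)$-component to a hyperelliptic curve and using monodromy on $\widetilde{\cH}_g\subset\cT_g^e$ to swap the limit Weierstrass points $R_1,R_2$ on $E$ (which is what the paper does), you observe that the monodromy of $\cR_{1,1}$ itself already swaps the two even theta-characteristics on $E$ different from $\eta_E$ --- the stabiliser of $\eta_E$ in $SL_2(\bZ/2)\cong S_3$ is exactly the transposition of the remaining two nonzero $2$-torsion classes. This works at a \emph{general} point of $\alpha$ and avoids the hyperelliptic detour entirely.

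The slip is the parenthetical ``equivalently, that $\cT_g^e$ meets $\Delta_1$ transversally'': this is false, and the paper says so explicitly (``the intersection of $\cT_g^e$ and $\Delta_1$ is irreducible \ldots\ but not reduced''). Since there are two limit pencils over each general point of $\alpha$, the intersection cycle is $2\alpha$. What your argument actually needs --- and what a \autoref{transvers}-style limit-linear-series check gives --- is that $X^e$ meets $\pi^{-1}(\Delta_1)$ transversally, i.e.\ that $\tilde\alpha:=X^e\cap\pi^{-1}(\Delta_1)$ is reduced. With that correction your step~(iii) should be run inside $X^e$ rather than $\cT_g^e$: every component of $X^e$ maps birationally onto a divisor of $\ocR_g$, hence meets $\pi^{-1}(\Delta_1)$ by \autoref{divboundary}, hence by dimension contains the irreducible $\tilde\alpha$; reducedness of $\tilde\alpha$ then forces a unique component. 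As written, your appeal to \autoref{irredneigh} does not bridge ``the two pencils lie on one global component of $X^e$'' and ``$\cT_g^e$ is locally irreducible at $(C,\eta)$'' --- that remark only trades local irreducibility of $\cT_g^e$ for local irreducibility of $X^e$, which is a different statement.
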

\begin{proof}
The proof is similar to that of $\cT^o_g$, but with some simplifications (due to the fact that the intersection $\cT_{g}^e\cap\Delta_1$ consists only of a locus $\alpha$). Let us give an outline of the argument.

In virtue of \autoref{boundary}, the general point of $\alpha$ is the union at a point $P$ of a Prym elliptic curve $(E,\eta)$ and a curve $C_{g-1}\in\cM_{g-1}$ (with trivial 2-torsion bundle) having a 1-dimensional theta-characteristic.
Let us denote by $R_1,R_2,R_3$ the points of $E$ differing from $P$ by $2$-torsion, so that $\eta=\cO_E(R_1-R_2)$. 

Then there are exactly two \textit{limit semicanonical pencils on $E\cup_PC_{g-1}$ remaining even when twisted by $(\eta,\cO_{C_{g-1}})$}. For these limit semicanonical pencils, $\cO_E(R_1-R_3)$ and $\cO_E(R_2-R_3)$ are the induced theta-characteristics on $E$ (and hence $|R_2+P|+(g-3)P$ and $|R_1+P|+(g-3)P$ are the corresponding aspects on $E$).

It follows that the intersection of $\cT^e_g$ and $\Delta_1$ is irreducible (by irreducibility of $\cT_{g-1}$) but not reduced. We also deduce that $\cT^e_g$ will have at most two irreducible components, but we cannot directly derive the irreducibility of $\cT^e_g$.

To circumvent this problem, we consider (as in the proof of \autoref{irredTog}) a Prym curve $(C,\eta)\in\cT^e_g\cap\Delta_1$ obtained by taking $C_{g-1}=C'$ ($C'$ general smooth hyperelliptic curve) and $P\in C'$ a Weierstrass point. Recall that $(C,\eta)$ is the general point of the intersection $\widetilde{\cH}_g\cap\Delta_1$ ($\widetilde{\cH}_g\subset\cT_{g}^e$ being the locus of hyperelliptic Prym curves whose 2-torsion bundle is a difference of two Weierstrass points).

By using monodromy on smooth hyperelliptic curves to interchange the (limit) Weierstrass points $R_1$ and $R_2$, we obtain that monodromy on $\widetilde{\cH}_g\subset\cT_{g}^e$ connects (locally around $(C,\eta)$) the two possible irreducible components of $\cT^e_g$. This finishes the proof.
\end{proof}

\vspace{1mm}

All in all, we have showed the irreducibility of $\cT_g^o$ and $\cT_g^e$ for every $g\neq4$. As explained in the introduction, the irreducibility of $\cT_4^o$ and $\cT_4^e$ can be deduced from a study of the Prym map $\cP_4$ restricted to these divisors, which is contained in \cite{lnr}.

\vspace{1mm}

\end{document}